\documentclass{IEEEtran}
\usepackage{cite}
\usepackage{amsmath}
\usepackage{amssymb}
\usepackage{amsfonts}
\usepackage{amsthm}
\usepackage{algorithmic}
\usepackage{graphicx}
\usepackage[T1]{fontenc}                    
\usepackage[utf8]{inputenc}                 
\def\BibTeX{{\rm B\kern-.05em{\sc i\kern-.025em b}\kern-.08em
    T\kern-.1667em\lower.7ex\hbox{E}\kern-.125emX}}

\usepackage[dvipsnames]{xcolor} 
\usepackage{colortbl}
\usepackage{mathrsfs}
\usepackage{diagbox}
\usepackage[colorlinks = true,
linkcolor = blue,
urlcolor  = blue,
citecolor = blue]{hyperref}

\usepackage[capitalise, noabbrev, nameinlink]{cleveref}

\pdfminorversion=4

\setlength{\textfloatsep}{8pt}


\newcommand{\ldef}{:=}
\newcommand{\rdef}{=:}
\newcommand{\Mcal}[1]{\mathcal{#1}}
\newcommand{\Mc}[1]{\mathcal{#1}}
\newcommand{\tth}{^{\text{th}}}
\newcommand{\bld}[1]{\mathbf{#1}}


\newtheorem{theorem}{Theorem}[section]
\newtheorem{corollary}[theorem]{Corollary}
\newtheorem{lemma}[theorem]{Lemma}
\newtheorem{example}[theorem]{Example}

\newtheorem{remark}[theorem]{Remark}
\newtheorem{define}[theorem]{Definition}
\newtheorem{problem}[theorem]{Problem}
\newtheorem{assume}[theorem]{Assumption}


\newcommand{\integer}{\ensuremath{\mathbb{Z}}}

\newcommand{\real}{\ensuremath{\mathbb{R}}}

\newcommand{\realnonnegative}{\ensuremath{\mathbb{R}}_{\ge 0}}

\newcommand{\bulletsym}{\hbox{$\bullet$}}
\newcommand{\bulletend}{\relax\ifmmode\else\unskip\hfill\fi\bulletsym}
\newcommand{\squaresym}{\hbox{$\blacksquare$}}
\newcommand{\proofend}{\relax\ifmmode\else\unskip\hfill\fi\squaresym}
\newcommand{\trianglesym}{\hbox{$\blacktriangle$}}
\newcommand{\egend}{\relax\ifmmode\else\unskip\hfill\fi\trianglesym}

\renewenvironment{proof}{\textit{Proof.} }{\proofend}

\renewcommand{\hat}[1]{\widehat{#1}}
\renewcommand{\bar}[1]{\overline{#1}}


\def \agt{\Mcal{A}}
\def \alloc{\Mcal{V}}
\def \alloch{\hat{\alloc}}

\def \diam{\mathrm{diam}}
\def \dg{\mathrm{d}}
\def \edg{\Mcal{E}}

\def \exp{\mathrm{exp}}
\def \eqpt{\Mcal{W}}

\def \fseq{z}
\def \game{\mathscr{G}}

\def \grph{\Mcal{G}}

\def \I{\Mcal{I}}

\def \integer{\mathbb{Z}}
\def \intpos{\integer_{\geq 0}}
\def \kcell{[0,1]}

\def \M{\bld{M}}
\def \ne{\Mc{NE}}
\def \neigh{\Mc{N}}
\def \neighb{\bar{\Mc{N}}}

\def \opt{\Mc{O}}

\def \real{\mathbb{R}}
\def \rf{r}
\def \S{\Mc{S}}
\def \salloc{\Mc{P}}
\def \set{\Mc{S}}
\def \setfunc{C}

\def \supp{\mathrm{tsupp}\,}
\def \switch{\sigma_\mathrm{sw}}
\def \tsk{\Mc{Q}}

\def \unif{\mathrm{Unif}}
\def \v{\bld{v}}
\def \w{\bld{w}}
\def \W{\bld{W}}
\def \wb{\overline{\w}}
\def \Wb{\bar{\W}}
\def \wh{\hat{\w}}

\def \wbar{\overline{w}}
\def \what{\hat{w}}

\def \x{\bld{x}}
\def \X{\Mcal{X}}

\def \y{\bld{y}}
\def \zero{\bld{0}}

\DeclareMathOperator*{\submax}{max^{(2)}}
\DeclareMathOperator*{\argmax}{argmax}
\DeclareMathOperator*{\argsubmax}{argmax^{(2)}}
\renewcommand{\mod}{\,\,\mathrm{mod}\,\,}

\def \dynacr{PBRAG}

\newcommand{\until}[1]{\{1,\cdots, #1\}}
\newcommand{\tskind}[1]{\mathfrak{#1}}

\graphicspath{{./figs/}}

\begin{document}
\title{Distributed Task Allocation for Self-Interested Agents with Partially Unknown Rewards}
\author{Nirabhra Mandal, \IEEEmembership{Student Member, IEEE}, \hspace{0.1ex} Mohammad Khajenejad, \IEEEmembership{Member, IEEE}, \\ and Sonia Mart{\'\i}nez, \IEEEmembership{Fellow, IEEE}
\thanks{This work is supported by the ARL grant: W911NF-23-2-0009.}
\thanks{Nirabhra Mandal, Mohammad Khajenejad
and Sonia Mart{\'\i}nez are with the Mechanical \& Aerospace Engineering Department, University of California San Diego. \texttt{\{nmandal,mkhajenejad,soniamd\}@ ucsd.edu}.}}


\maketitle

\begin{abstract}
  This paper provides a novel solution to a task allocation problem,
  by which a group of agents decides on the assignment of a discrete
  set of tasks in a 
  distributed manner. In this setting, heterogeneous agents have
  individual preferences and associated rewards for doing each task;
  however, these rewards are only known asymptotically. We start by
  formulating the assignment problem by means of a combinatorial
  partition game for known rewards, with no constraints on number of
  tasks per agent. We relax this into a weight game, which together
  with the former, are shown to contain the optimal task allocation in
  the corresponding set of Nash Equilibria (NE). We then propose a
  projected, best-response, ascending gradient dynamics (\dynacr) that
  converges to a NE in finite time. This forms the basis of a
  distributed online version that can deal with a converging sequence
  of rewards by means of an agreement sub-routine. We present
  simulations that support our results.
\end{abstract}

\begin{IEEEkeywords}
Best response, partition game, projected gradient ascent, unknown reward, weight game
\end{IEEEkeywords}

\section{Introduction}\label{sec:intro}
A prototypical multi-agent coordination problem aims to find an
efficient assignment of group of agents to complete a collection of
tasks.  These tasks can range from abstract sets of objectives to
specific physical jobs, the nature of which may not be completely
known. In addition, the agents composing the group may have
heterogeneous capabilities, and react to different sets of incentives
that are being learned progressively. This necessitates of novel
task-assignment algorithms that can adapt and react online as new
information arises. Motivated by this, we study a discrete task
allocation problem modeled as a game of self-interested agents that
have partial knowledge of their rewards. This requires addressing the
problem's combinatorial nature, and designing provable-correct
distributed dynamics that adapt to dynamic rewards revealed online. To
the best of our knowledge, algorithms that combine all these features
are not available in literature.

\paragraph{Literature review}
The problem of task allocation with known rewards has been widely
considered; see e.g.~\cite{HLC-LB-JPH:09, FB-EF-MP-KS-SLS:11,
  AS-SLS:17}. A centralized solution to this problem, where the number
$m$ of tasks and agents are equal and a task-agent matching is sought,
is the optimization-based Hungarian algorithm~\cite{HWK:55}, and its
distributed version~\cite{SC-GN-MR-ME:17}. The latter, which
reproduces the Hungarian algorithm locally, requires tracking of the
agents' identities associated with each task, has a time complexity of
$O(m^3)$ and communication cost of $O(m\log m)$ (per communication
round). Thus, the algorithm can be computationally and
memory-intensive for large problems, and hard to adapt as new tasks are generated or
their valuations change online.
%
%
%
%
The work in~\cite{JC-AF-PM-SDS:14} provides a tractable, sub-optimal
solution to the same NP-hard problem, 
while the research in
\cite{AP-SS-HLC:18} showed that the sub-optimality can be resolved by
restricting heterogeneous agents to be of certain types. 
In the same vein, the works
in~\cite{NR-SSK:23,JV:08, GC-CC-MP-JV:11} considers submodular
functions which allow rewards to take any non-negative value. However,
submodular optimization can be applied in specific domains where the
property naturally arises, such as in certain economics and
distributed sensing problems.  Alternatively, a well known approach to
(unconstrained) task assignment problems is given by $k$-means
clustering and the Lloyd's algorithm~\cite{SL:82}. 
By interpreting that tasks are generated by a probability distribution,
the approach can handle tasks generated
dynamically~\cite{MS-YDM-ME:18,MS-ME:18,JC-SM-TK-FB:02-tra,EF-FB:03r}. 
However, it is well known that Lloyd's algorithm is sensitive to the
initial task assignment for a small number of agents, and converges to
a local minima.

Game-theoretic models have also been proposed to find solutions to
task allocation problems. For sensor networks, each agent is equipped
with an appropriate utility function~\cite{MZ-SM:10,JRM-GA-JSS:09,RK-RC-DG-JRM:22}
and the optimal task allocation is related to the Nash equilibrium of this
game. Any Nash-seeking~\cite{PF-MK-TB:10} algorithm returns a
solution; but often, these algorithms require strong assumptions on
the utility functions and their derivatives. Distributed versions of
Nash-seeking with consensus in continuous time have been explored
in~\cite{MY-GH:17,ZD-XN:18}, while~\cite{JK-AN-UVS:12,FS-LP:16}
addressed the problem in discrete time. All the work in
\cite{MY-GH:17}-\cite{FS-LP:16} assumed complete and perfect
information for agents, while in practice, agents may have limited or
imperfect information about the tasks, and the capabilities of other
agents. Potential games can be used in this regards, but they do not
work when the reward parameters are unknown. To that end,
\cite{RK-RC-DG-JRM:22} characterizes transient behavior for set
covering games, and~\cite{ACC-RAM-RK-NRJ:10} looks at a general
potential game approach for task allocation. In the latter case, the
agents are homogeneous and tasks have same rewards for all agents.

\paragraph{Contributions}
We consider a task-assignment problem where a number of agents is to
be matched to an unrestricted set of tasks. In the considered
formulation, the number of tasks per agent is not constrained, yet the
optimal assignment problem remains combinatorial as the number of
tasks is discrete. To deal with arbitrary heteregenous agents, we
derive a game-theoretic partition problem formulation that favors task
distribution. We then relax the game into a weight game, one per
task. We obtain characterizations of the NE of each game, their
relationship, and identify conditions under which the NE leads to an
optimal solution of the original assignment problem. Leveraging the
relaxed formulation, and under a full-information assumption, we
derive a projected best-response dynamics that is shown to converge to
the optimal task allocation in finite time. This forms the basis for a new algorithm,
\dynacr, which is distributed, does not require the knowledge of other
agent identities, and converges to the optimal task allocation, also in finite time, as
rewards are revealed online.

\section{Preliminaries} \label{sec:prelim}
Here, we formalize the notations and briefly list some  well-known concepts that are used to solve the problem formulated in the following section. 

\subsection{Notations}
The sets of real numbers,  non-negative real
numbers, and non-negative integers are denoted
as $\real$, $\real_{\geq 0}$, and $\intpos$,
respectively. For a set $\set$, $|\set|$ denotes its cardinality, $2^\set$ represents the class of all its subsets, $\set^n$ denotes the $n$ Cartesian product of $\set$ with itself, and $\set^{n \times m}$ collects all
$n \times m$ matrices whose $(i,j)\tth$ entry lies in $\set$.  Given $\M \in \set^{n \times m}$, $m^j_i$ is its $(i,j)\tth$ entry, and $\bld{m}_i^\top \in \set^m$ (resp.~$\bld{m}^j \in \set^n$) its $i\tth$ row (resp.~its $j\tth$ column). 
For $x \in \real$, $[x]_0^1 \ldef \max \{0, \min \{ x , 1 \} \}$. For a set $\set$, define $\submax \set \ldef \max \{s \in \set \,|\, s \neq \max \,\set\}$. For a vector $\x \in \real^n$ and a set $\set \subseteq \real^n$, $d(\x,\set) \ldef \inf_{\y \in \set} \|\x-\y\|_1$ is the distance of the vector from the set. Lastly, the empty set is denoted as $\varnothing$. 

\subsection{Game theory}
A \emph{strategic form game}~\cite{YN:14} is a tuple $\game \ldef \left<\agt,\{\S_i\}_{i\in \agt}, \{\psi_i\}_{i \in \agt} \right>$ consisting of the following components:
\begin{enumerate}
	\item a set of \emph{players} (or \emph{agents}) $\agt$;
	\item a set of \emph{strategies} $s_i \in \S_i$ available to each $i \in \agt$;
	\item a set of \emph{utility functions} $\psi_i : \times_{i \in \agt}\S_i \to \real$ over the strategy profiles of all the agents.
\end{enumerate}
In what follows,  $s_{-i}$ denotes the strategy profile of all players other than $i \in \agt$. Next, we formally state the definition of the NE of a strategic form game.

\begin{define}[Nash equilibrium]
\label{def:ne}
The strategy profile $(\hat{s}_i,\hat{s}_{-i})$ is a Nash equilibrium
(NE) of $\game$ if and only if
\begin{align*}
	\psi_i(\hat{s}_i,\hat{s}_{-i}) \geq \psi_i(s_i,\hat{s}_{-i}), \,\, \forall s_i \in \S_i, \,\, \forall i \in \agt\,. 
\end{align*}
$\ne(\game)$ denotes the set of all Nash equilibria of $\game$.
\bulletend
\end{define}

\subsection{Graph theory}
A \emph{directed graph}~\cite{RD:17}
$\grph \ldef (\agt,\edg)$, is a tuple consisting of (a)  a set of \emph{nodes} (here agents $\agt$); (b) a set of \emph{arcs} $\edg \subseteq \agt \times \agt$ between the nodes.
The set $\neigh_i \ldef \{ j \in \agt \,|\, (j,i) \in \edg\}$ denotes the (in) \emph{neighbors} of node $i \in \agt$ and $\neighb_i \ldef \neigh_i \cup \{i\}$. A \emph{path} is an ordered set of non-repeating nodes such that each tuple of adjacent nodes belongs to $\edg$. The graph $\grph$ is said to be strongly connected if there exists a path from every node to every other node. The \emph{diameter} of the graph $\diam(\grph)$ is the length of the largest possible path between any two nodes.

\section{Problem Formulation} \label{sec:problem} 
A group of agents $\agt \ldef \until{n}$ is to complete a set of tasks
$\tsk \ldef \until{m}$, where $n \neq m$ possibly, 
in a distributed manner.  For this purpose,
each agent $i \in \agt$ encodes via
$\phi_i : \tsk \to \realnonnegative$ the importance of each task (the
higher $\phi_i(q)$ the larger the agent's capability/fondness on
$q \in \tsk$) and $\rf_i : \tsk \to \realnonnegative$ the 
reward for completing each task.  
For the sake of brevity, define
$f_i(q) \ldef \rf_i(q)\phi_i(q) \geq 0$, $\forall q \in \tsk$,
$\forall i \in \agt$. 
An optimal task assignment is the solution to 
\begin{subequations}
\begin{align}
	\label{eq:partition_reward} & \max_{{\salloc = (\alloc_1,\cdots,\alloc_n)  \subseteq \tsk^n}} \quad J(\salloc)\ldef \sum_{i \in \agt} \sum_{q \in \alloc_i} f_i(q),   \\
	\label{eq:partition_constraint} & \mathrm{s.t.} \quad \bigcup_{i \in \agt} \alloc_i = \tsk; \quad \alloc_i \cap \alloc_j = \varnothing,\,\, \mathrm{if}\,\, i \neq j.
\end{align}
\label{eq:partition_optimization}
\end{subequations}
Here, $\alloc_i \subseteq \tsk$ is the set of tasks assigned to agent
$i \in \agt$ and $\salloc = (\alloc_1,\cdots,\alloc_n)$ is the ordered
collection of sets that defines a partition of $\tsk$ (as in~\labelcref{eq:partition_constraint}). 


The group of agents is to compute an optimal partition of the task set
$\tsk$ on their own. Naturally, each agent $i \in \agt$ aims to get
the tasks $q\in \tsk$ for which $f_i(q)$ is the largest. This
motivates the following definition. 
\begin{define}[Task specific dominating agent] \label{def:dominating_agent} 
    An agent $i \in \agt$ is said to be a \emph{dominating
    agent for task $q \in \tsk$} (or $i$ \emph{is dominating for} $q$), if $f_i(q) \geq f_j(q)$,
  $\forall j \in \agt$.  If a task $q \in \tsk$ has exactly one
  dominating agent, we say that there exists a \emph{unique dominating
    agent for task $q$}.  \bulletend
\end{define}
 The collection of possible strategies for each agent is a combinatorial class, which
grows exponentially as the number of tasks $m$ increases. To address this problem, 
we first assume that each $i \in \agt$ 
measures the utility of a subset of tasks $
\alloc_i \subseteq \tsk$ via

\vspace{-.3cm}
\begin{align}
	H_i(\alloc_i,\alloc_{-i}) \ldef \sum_{q \in \alloc_i} \Big[ f_i(q) - \hspace{-2ex} \max_{j \in \agt \setminus \{i\}, q \in \alloc_j} f_j(q) \Big].
	\label{eq:agent_reward_partn} 
\end{align}
This leads to the partition
game
\begin{align*}
	\game_P \ldef {\left<\agt,\{2^{\tsk}\}_{i\in \agt}, \{H_i\}_{i \in \agt} \right>},
\end{align*}
where the strategy of each agent is to choose a subset $\alloc_i$ of
$\tsk$ to maximize $H_i$. In this way, 
agent strategies are no-longer required to form a valid partition,
but the utility in~\labelcref{eq:agent_reward_partn} penalizes each agent for
taking tasks that others have chosen.

Second, we further relax this game by reducing the decision of each
agent $i \in \agt$ regarding task $q \in \tsk$ to the computation of a
weight $w^q_i \in [0,1]$.  Briefly, this defines
$\W \in [0,1]^{n \times m}$ as the matrix whose $(i,q)\tth$ entry is
$w^q_i$.
Thus, 
$\w_i^\top \in [0,1]^m$ (resp.~$\w^q \in [0,1]^n$) 
represents the weights that agent $i \in \agt$ (resp.~for task
$q \in \tsk$) gives to each task (resp.~given by each agent).
%
%
Agent $i \in \agt$ is equipped with the
utility function:
\begin{align}
	U_i(\w_i,\w_{-i}) & = \sum_{q \in \tsk} \Big[ f_i(q)w^q_i - \max_{j \in \agt \setminus \{i\}} f_j(q)w^q_j w^q_i \Big] \,,
	\label{eq:agent_reward_max} 
\end{align}
which collectively define the weight game
\begin{align*}
	\game_W \ldef {\left<\agt,
	\W,\{U_i\}_{i \in \agt} \right>},
\end{align*}
In this way, a product of
weights in the second part of the sum in~\labelcref{eq:agent_reward_max}
relaxes the check on overlapping task in~\labelcref{eq:agent_reward_partn}. 
In this paper, we ignore the trivial
case where all agents get the same payoff for a task, as stated in the
following.

\begin{assume}[Non-trivial task assignment]\label{asmp:not_all_dominating}
  Not all agents are dominating for each task $q \in \tsk$.
  \bulletend
\end{assume}

The above framework allows us to deal with a case where the $f_i(q)$
are unknown to the agent, but where these values can be learned
progressively by an external mechanism until convergence. More
precisely, we assume the following. 
\begin{assume}[Converging reward sequence]\label{asmp:convering_seq_reward}
For each $i \in \agt$, $q \in \tsk$, there exists a sequence $\{\fseq^q_i(t)\}_{t \in \intpos}$ such that $\fseq^q_i(t) \to f_i(q)$ as $t \to \infty$.
\bulletend
\end{assume} 
In what follows, we first study the games when the reward parameters
are known. Then we adapt the results for the case when only a
converging reward sequence is available.

Now we formally state the goals of this work.
\begin{problem}\label{prob:main}
  Given the aforementioned setup and 
  the non-trivial task assignment assumption,  find
\begin{enumerate}
	\item \label{prob:relate_nep_neg} a relationship between the NE of $\game_P$ and $\game_W$,
	\item \label{prob:relate_ne_opt} a relationship between the NE and optimal partitions according to~\labelcref{eq:partition_optimization},
	\item \label{prob:dist_ne_algo} a  distributed algorithm that converges to the NE of the limiting weight game $\game_W$ under the converging reward sequence assumption. 
\bulletend
\end{enumerate}
\end{problem}

\section{On Nash Equilibria and Optimal Partitions} \label{sec:ne}

 We start by addressing the first two problems above. 
Thus, 
we first characterize
the NE of the partition game~$\game_P$.
\begin{lemma}[Nash equilibria of $\game_P$] \label{lem:nash_partition_game}
The strategy $(\alloch_i,\alloch_{-i}) \in \ne(\game_P)$ if and only if:
\begin{enumerate}
\item \label{prop:part_game_dom_agt} for each  $q\in \tsk$, $\exists \, i \in \agt$  dominating for $q$ and $q \in \alloch_i$;
	\item \label{prop:part_game_not_dom_agt} if $j$ is not a dominating agent for task $q$, then $q \not\in \alloch_j $.
\end{enumerate}
\end{lemma}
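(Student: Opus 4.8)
The plan is to prove both directions of the iff by analyzing the structure of the utility $H_i$ in~\labelcref{eq:agent_reward_partn}. The first observation is that $H_i$ decomposes additively over tasks: $H_i(\alloc_i,\alloc_{-i}) = \sum_{q\in\alloc_i} g_i(q,\alloc_{-i})$ where $g_i(q,\alloc_{-i}) \ldef f_i(q) - \max_{j\neq i,\, q\in\alloc_j} f_j(q)$ (with the convention that the max over an empty index set is $0$). Since agent $i$ can choose each $q$ to be in $\alloc_i$ or not independently, the best response of $i$ to a fixed $\alloc_{-i}$ is to include exactly those $q$ with $g_i(q,\alloc_{-i}) > 0$, and it is free to include or exclude those $q$ with $g_i(q,\alloc_{-i}) = 0$; it must exclude $q$ whenever $g_i(q,\alloc_{-i}) < 0$. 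This per-task decoupling is the workhorse of the whole argument, so I would state it as the first step.

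For the ``only if'' direction, suppose $(\alloch_i,\alloch_{-i})\in\ne(\game_P)$. To get property~\labelcref{prop:part_game_not_dom_agt}, suppose $j$ is not dominating for $q$ but $q\in\alloch_j$. Then some agent $i$ has $f_i(q) > f_j(q)$. I would argue that regardless of whether $i$ currently holds $q$, the value $g_j(q,\alloch_{-j}) = f_j(q) - \max_{k\neq j,\, q\in\alloch_k} f_k(q)$ is strictly negative if $i$ holds $q$ (since the max is then $\geq f_i(q) > f_j(q)$); and if no agent with $f_i(q)>f_j(q)$ holds $q$, one still needs care — but actually the cleanest route is: if $q\in\alloch_j$ with $j$ not dominating, look at a true dominating agent $i^\star$ for $q$. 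Either $i^\star$ holds $q$, in which case $g_j(q,\cdot)<0$ and $j$ should drop it, contradiction; or $i^\star$ does not hold $q$, in which case $g_{i^\star}(q,\alloch_{-i^\star}) = f_{i^\star}(q) - \max_{k\neq i^\star,\, q\in\alloch_k} f_k(q) \geq f_{i^\star}(q) - f_j(q) \geq 0$, and moreover it is $>0$ unless the current holder's value equals $f_{i^\star}(q)$, i.e.\ unless the holder is also dominating. Handling this last subtlety is where I expect the main friction: I need to show that at an NE, any task held by a non-dominating agent can be ``stolen'' by a dominating one for strictly positive gain, which fails only if that task is simultaneously held by a dominating agent — but then the non-dominating holder could strictly improve by dropping it. So a short case analysis closes property~\labelcref{prop:part_game_not_dom_agt}. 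Property~\labelcref{prop:part_game_dom_agt} then follows: given~\labelcref{prop:part_game_not_dom_agt}, task $q$ can only be held by dominating agents; if $q$ is held by \emph{no} agent, then for any dominating agent $i$ we have $g_i(q,\alloch_{-i}) = f_i(q) \geq 0$, and by Assumption~\ref{asmp:not_all_dominating} we may pick $i$ with $f_i(q)>0$ (some agent is non-dominating, so the dominating value is strictly positive), giving $g_i(q,\cdot)>0$ and so $i$ strictly benefits from adding $q$ — contradicting NE. Hence some dominating agent holds $q$.

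For the ``if'' direction, assume~\labelcref{prop:part_game_dom_agt}--\labelcref{prop:part_game_not_dom_agt} hold for $(\alloch_i,\alloch_{-i})$. I must check no agent has a profitable deviation, i.e.\ for every $i$, the current $\alloch_i$ is a best response, which by the decoupling step means: every $q\in\alloch_i$ has $g_i(q,\alloch_{-i})\geq 0$, and every $q\notin\alloch_i$ has $g_i(q,\alloch_{-i})\leq 0$. For $q\in\alloch_i$: by~\labelcref{prop:part_game_not_dom_agt}, $i$ is dominating for $q$; by~\labelcref{prop:part_game_dom_agt} and~\labelcref{prop:part_game_not_dom_agt}, every other holder of $q$ is also dominating, so $\max_{k\neq i,\, q\in\alloch_k} f_k(q)$ is either $f_i(q)$ (if another dominating agent holds $q$) or $0$ (if $i$ is the only holder) — in both cases $g_i(q,\alloch_{-i})\geq 0$. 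For $q\notin\alloch_i$: by~\labelcref{prop:part_game_dom_agt} some dominating agent $k\neq i$ holds $q$, so $\max_{k'\neq i,\, q\in\alloch_{k'}} f_{k'}(q) = \max_{k'\in\agt} f_{k'}(q) \geq f_i(q)$, hence $g_i(q,\alloch_{-i})\leq 0$. Therefore $\alloch_i$ is a best response for every $i$, so the profile is an NE. The only real obstacle in the whole proof is the case bookkeeping around ties (multiple dominating agents, and the empty-max convention); everything else is a direct consequence of the additive-over-tasks decoupling.
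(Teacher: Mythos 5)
Your proof is correct and follows essentially the same route as the paper's: both arguments rest on the fact that $H_i$ decomposes additively over tasks, so an NE is characterized by the sign of the per-task marginal $f_i(q) - \max_{k \neq i,\, q \in \alloch_k} f_k(q)$, with single-task add/drop deviations supplying the contradictions for necessity and the sign check supplying sufficiency. The only differences are cosmetic — you establish Property~\ref{prop:part_game_not_dom_agt} before Property~\ref{prop:part_game_dom_agt} (which costs you the extra tie case you flag, whereas the paper proves them in the other order and avoids it), and you are more explicit than the paper about the empty-max convention and about needing Assumption~\ref{asmp:not_all_dominating} to make the dominating agent's gain strictly positive.
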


\begin{proof}
  First, we show the necessity of Properties~\ref{prop:part_game_dom_agt} and~\ref{prop:part_game_not_dom_agt}.
Suppose that $(\alloch_i,\alloch_{-i}) \in \ne(\game_P)$.
  We prove Property~\ref{prop:part_game_dom_agt}
by contradiction and assume $\exists \,q \in \tsk$ such that $\forall i \in \agt$ dominating for task $q$, $q \notin \alloch_i$. Pick one such agent $i$ and take $\alloc_i = \alloch_i \cup \{q\}$. Then, 
\begin{align*}
	H_i(\alloc_i,\alloch_{-i}) - H_i(\alloch_i,\alloch_{-i}) = f_i(q) - \hspace{-1ex} \max_{k \neq i, \, q \in \alloch_k} f_k(q)>0 .
\end{align*}
The inequality is strict since $i$ is dominating  and the max is over all agents that are not dominating for $q$ (by assumption). This is a contradiction with  $(\alloch_i,\alloch_{-i}) \in \ne(\game_P)$.

The necessity of Property~\ref{prop:part_game_not_dom_agt} also follows from contradiction. Suppose $\exists \, q \in \tsk$ and a $j \in \agt$ 
not dominating for $q$ with $q \in \alloch_j$. From Property~\ref{prop:part_game_dom_agt}, there is an
$i \in \agt$ dominating for $q$ with $q \in \alloch_i$. Thus, with strategy $(\alloch_j,\alloch_{-j})$, 
\begin{align*}
f_j(q) - \max_{k \neq j, \, q \in \alloch_k} f_k(q) = f_j(q) - \max_{k \in \agt} f_k(q) < 0. 
\end{align*}
Now, consider the strategy $\alloc_j = \alloch_j \setminus \{q\}$. It
follows that
$H_j(\alloc_j,\alloch_{-j}) - H_j(\alloch_j,\alloch_{-j}) > 0$, which contradicts 
$(\alloch_j,\alloch_{-j}) \in \ne(\game_P)$.

Now, we show the sufficiency of Properties~\ref{prop:part_game_dom_agt} and~\ref{prop:part_game_not_dom_agt}. Let $(\alloch_i,\alloch_{-i})$ satisfy Properties~\ref{prop:part_game_dom_agt} and~\ref{prop:part_game_not_dom_agt} and let $i \in \agt$ be an arbitrary but fixed agent. Suppose that $\alloc_i 
\neq \alloch_i$ is any other strategy. Then, 
 the proof follows from  three cases:

  \emph{Case (i):} $\exists \, q \in \alloc_i$ 
such that $q \notin \alloch_i$ and $i$ is  dominating for $q$. Then, since $\exists \, j \in \agt$ 
dominating for $q$ with $q \in \alloch_j$, 
\begin{align*}
  f_i(q) - \max_{k \neq i, \, q \in \alloch_k} f_k(q) = f_i(q) - f_j(q) = 0. 
\end{align*}

\emph{Case (ii):} $\exists \,q \in \alloc_i$ such that
$q \notin \alloch_i$ and $i$ does not dominate $q$. Then, as
$\exists \, j \in \agt$ dominating for $q$ and $q \in \alloch_j$, we
have
\begin{align*}
	f_i(q) - \max_{k \neq i, \, q \in \alloch_k} f_k(q) = f_i(q) - f_j(q) < 0.
\end{align*}

\emph{Case (iii):} $\exists \,q \in \alloch_i$ such that
$q \notin \alloc_i$. This can only happen if $i$ is dominating for $q$
(else, by Property~\ref{prop:part_game_not_dom_agt}, $q \notin \alloch_j$).
Then, 
\begin{align*}
  f_i(q) - \max_{k \neq i, \, q \in \alloch_k} f_k(q) \geq 0.
\end{align*}

From the above, it is easy to see that any deviation from $(\alloch_i,\alloch_{-i})$ 
will not result in an increase in utility for $i$ since $H_i(\alloc_i,\alloch_{-i}) - H_i(\alloch_i,\alloch_{-i}) = f_i(q) - \max\limits_{k \neq i, \, q \in \alloch_k} f_k(q)$.
\end{proof}

From the previous result, at least one of the dominating agents will
be assigned to a task by means of a NE strategy of $\game_P$. However,
this does not preclude that two dominating agents are assigned the
same task. Next, we show that the NE of the relaxed game $\game_W$ are
equivalent to the NE of~$\game_P$.

\begin{lemma}[Nash equilibria of $\game_W$] \label{lem:nash_weight_game}
The strategy $(\wh_i,\wh_{-i}) \in \ne(\game_W)$ if and only if:
\begin{enumerate}
	\item \label{prop:wt_game_dom_agent} for each  $q\in \tsk$, $\exists \, i \in \agt$  dominating for $q$ and $\hat{w}^q_i = 1$;
	\item \label{prop:wt_game_not_dom_agent} if $j$ is not a dominating agent for task $q$, then $\hat{w}^q_j = 0 $.
\end{enumerate}
\end{lemma}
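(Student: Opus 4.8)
The plan is to exploit the separable, affine structure of $U_i$ in~\labelcref{eq:agent_reward_max}: the weight $w^q_i$ enters only the task-$q$ summand, and there the dependence is linear with slope $f_i(q) - \max_{j \in \agt \setminus \{i\}} f_j(q) w^q_j$. Hence, for a fixed opponent profile $\w^q_{-i}$, agent $i$'s best responses on task $q$ are $w^q_i = 1$ when this slope is strictly positive, $w^q_i = 0$ when it is strictly negative, and any value in $[0,1]$ when it is zero; and $(\wh_i,\wh_{-i}) \in \ne(\game_W)$ if and only if, for every $i \in \agt$ and $q \in \tsk$, $\hat{w}^q_i$ lies in this best-response set. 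All arguments are then carried out task by task. I would also record two consequences of \Cref{asmp:not_all_dominating}: for each $q$ there is a non-dominating agent, so $\max_{k \in \agt} f_k(q) > 0$; every dominating agent $i$ for $q$ has $f_i(q) = \max_k f_k(q) > 0$; and every non-dominating $j$ has $f_j(q) < \max_k f_k(q)$. These strict inequalities are what sharpen the contradictions below, exactly as in \Cref{lem:nash_partition_game}.

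For necessity, suppose $(\wh_i,\wh_{-i}) \in \ne(\game_W)$. To prove Property~\ref{prop:wt_game_dom_agent}, suppose for contradiction that some $q$ has no dominating agent with weight $1$, and pick any dominating agent $i$ for $q$. For every $j \neq i$, $f_j(q)\hat{w}^q_j < f_i(q)$, strictly: if $j$ is non-dominating then $f_j(q) < f_i(q)$ and $\hat{w}^q_j \le 1$, while if $j$ is dominating then $f_j(q) = f_i(q) > 0$ but $\hat{w}^q_j < 1$. Thus the slope for $i$ on $q$ is strictly positive, forcing $\hat{w}^q_i = 1$, a contradiction. For Property~\ref{prop:wt_game_not_dom_agent}, suppose $j$ is non-dominating for $q$ with $\hat{w}^q_j > 0$; by the Property~\ref{prop:wt_game_dom_agent} just established, there is a dominating $i \neq j$ with $\hat{w}^q_i = 1$, so the slope for $j$ on $q$ is at most $f_j(q) - f_i(q) = f_j(q) - \max_k f_k(q) < 0$, forcing $\hat{w}^q_j = 0$, again a contradiction.

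For sufficiency, let $(\wh_i,\wh_{-i})$ satisfy Properties~\ref{prop:wt_game_dom_agent} and~\ref{prop:wt_game_not_dom_agent}; fix $i \in \agt$ and $q \in \tsk$, and let $d$ be a dominating agent for $q$ with $\hat{w}^q_d = 1$. I distinguish three cases and in each check that $\hat{w}^q_i$ belongs to the best-response set determined by the sign of $s \ldef f_i(q) - \max_{j \neq i} f_j(q)\hat{w}^q_j$. If $i$ is non-dominating, then $\max_{j\neq i} f_j(q)\hat{w}^q_j \ge f_d(q)\hat{w}^q_d = \max_k f_k(q) > f_i(q)$, so $s < 0$ and $\hat{w}^q_i = 0$ by Property~\ref{prop:wt_game_not_dom_agent}. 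If $i$ is dominating, then, since non-dominating opponents contribute $0$ (Property~\ref{prop:wt_game_not_dom_agent}) and dominating opponents $j$ have $f_j(q) = f_i(q)$, we get $\max_{j\neq i} f_j(q)\hat{w}^q_j \le f_i(q)$, hence $s \ge 0$; if moreover $i = d$ then $\hat{w}^q_i = 1$ is a best response, and if $i \neq d$ then $\max_{j \neq i} f_j(q)\hat{w}^q_j \ge f_d(q)\hat{w}^q_d = f_i(q)$ forces $s = 0$, so any $\hat{w}^q_i \in [0,1]$ is a best response. Summing over $q$ shows $\wh_i$ is a best response to $\wh_{-i}$.

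The main obstacle is the last sufficiency case: Property~\ref{prop:wt_game_dom_agent} constrains only one dominating agent per task, so when $i$ is a second dominating agent the verification cannot conclude $\hat{w}^q_i = 1$; instead one must notice that the slope $s$ is then exactly zero, which makes every weight in $[0,1]$ optimal. Handling this, together with the repeated appeals to \Cref{asmp:not_all_dominating} needed to guarantee $f_i(q) > 0$ and hence strictness, is the only delicate bookkeeping; the remainder parallels the proof of \Cref{lem:nash_partition_game}.
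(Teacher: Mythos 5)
Your proof is correct and follows essentially the same route as the paper's: your ``sign of the slope $f_i(q) - \max_{j\neq i} f_j(q)\hat{w}^q_j$ determines the per-task best response'' framing is exactly the utility-difference computation the paper carries out term by term, and your necessity contradictions and three sufficiency cases (non-dominating; dominating with weight $1$; a second dominating agent, where the slope is forced to zero) match the paper's. Your explicit appeal to \Cref{asmp:not_all_dominating} to get $f_i(q)>0$ for dominating agents, ensuring the strict inequality $f_j(q)\hat{w}^q_j < f_i(q)$ when $\hat{w}^q_j<1$, is a point the paper leaves implicit; it is a welcome clarification but not a different argument.
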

\begin{proof}
First, we show the necessity of all properties. Suppose $(\hat{\w}_i,\hat{\w}_{-i}) \in \ne(\game_W)$. 
We show Property~\ref{prop:wt_game_dom_agent} is necessary by contradiction. Consider an arbitrary $q \in \tsk$ and suppose that for all dominating agents $i^*_q \in \agt$  for task $q$, it holds that $\what^q_{i^*_q} < 1$. In particular, for any such $i^*_q$, we have $\max_{j \neq \{i^*_q\}}f_j(q)\what^q_j < f_{i^*_q}(q)$. 
Now consider the strategy $\w_{i^*_q}$, where $w^q_{i^*_q} = 1$
and $w^{i^*_q}_p = \what^{i^*_q}_p$, $\forall p \neq q \in \tsk$. Then,
\begin{align*}
	& U_{i^*_q}(\w_{i^*_q},\wh_{-i^*_q}) - U_{i^*_q}(\wh_{i^*_q},\wh_{-i^*_q}) = \\
	& \hspace{3em} \Big[ f_{i^*_q}(q) - \max_{j \neq i^*_q} f_j(q)\what^q_j \Big] \big[ 1 -  \what^q_{i^*_q} \big] > 0.
\end{align*} 
This leads to a contradiction with $(\hat{\w}_i,\hat{\w}_{-i}) \in \ne(\game_W)$.

We similarly show Property~\ref{prop:wt_game_not_dom_agent} is necessary by contradiction. Let $q \in \tsk$ be an arbitrary task, and suppose that $\exists \,j \in \agt$ which is not dominating for $q$ but for which $\what^q_j > 0$. Due to Property~\ref{prop:wt_game_dom_agent}, let $i^*_q$ be the dominating agent for $q$ such that $\what^q_{i^*_q} = 1$. Now define a new strategy $\w_j$, with $w^q_j = 0$ and $w^j_p = \what^j_p$, $\forall p \neq q \in \tsk$. Then,
\begin{align*}
	U_j(\w_j,\wh_{-j}) - U_j(\wh_{j},\wh_{-j}) = 
	 \big[f_j(q) - f_{i^*_q}(q) \big] [- \what^q_j ] > 0, 
\end{align*} 
where the inequality is because both terms are negative. This
contradicts $(\hat{\w}_i,\hat{\w}_{-i}) \in
\ne(\game_W)$. 

Next, we show sufficiency. Let
$(\hat{\w}_i,\hat{\w}_{-i}) \in [0,1]^{n \times m}$
be a candidate strategy satisfying
Properties~\ref{prop:wt_game_dom_agent}-~\ref{prop:wt_game_not_dom_agent}
and let $i \in \agt$. Take any other $\w_i \neq \wh_i$ and a task
$q \in \tsk$. The proof follows from the following cases. 

  \emph{Case (i):} $i$ is a dominating agent for task $q$ and $\what^q_i = 1$. Then, from Definition~\ref{def:dominating_agent},
\begin{align*}
	\Big[f_i(q) - \max_{j \neq i} f_j(q)\what^q_j \Big]w^q_i \leq \Big[ f_i(q) - \max_{j \neq i} f_j(q)\what^q_j \Big] \what^q_i.
\end{align*}
	

        \emph{Case (ii):} $i$ is a dominating agent for task $q$ and ${\what^q_i < 1}$. Then, since $\exists \, j \in \agt$ dominating for $q$ and $\what^q_j = 1$,
\begin{align*}
	& \Big[f_i(q) - \max_{k \neq i} f_k(q)\what^q_k \Big]w^q_i = \big[ f_i(q) - f_j(q) \big] w^q_i \\ 
	& =	\big[ f_i(q) - f_j(q) \big] \what^q_i = \Big[f_i(q) - \max_{k \neq i} f_k(q)\what^q_k \Big] \what^q_i = 0,
\end{align*}
	since $f_i(q) - f_j(q) = 0$.
	
        \emph{Case (iii):} $i$ is not a dominating agent for task $q$ (and hence $\what^q_i = 0$).
	Again, $\exists \, j \in \agt$ dominating for $q$ and $\what^q_j = 1$. Then,
\begin{align*}
	 & \Big[f_i(q) - \max_{k \neq i} f_k(q)\what^q_k \Big]w^q_i = \big[ f_i(q) - f_j(q) \big] w^q_i \\
	 & < \big[ f_i(q) - f_j(q) \big] \what^q_i = \Big[f_i(q) - \max_{k \neq i} f_k(q)\what^q_k \Big] \what^q_i,
\end{align*}
	since $f_i(q) < f_j(q)$.
Now, using these three cases, it is
        easy to see that any deviation from
        $(\hat{\w}_i,\hat{\w}_{-i})$ will not result in an increase
        the utility
        of~$i$. 
\end{proof}

As a direct implication of Lemma~\ref{lem:nash_weight_game}, for any
$\W \in [0,1]^{n \times m}$ we can define
$\setfunc : [0,1]^{n \times m} \to (2^\tsk)^n$ as
\begin{align}
	\setfunc(\W) \ldef (\supp(\w_1),\cdots,\supp(\w_n)),
	\label{eq:setfunc}	
\end{align}
where
$\supp(\w_i) \ldef \{q \in \tsk \,|\, w^q_i = 1\},\,\, \forall i\in
\agt$.
Then, 
\begin{align}
	\ne(\game_P) = \setfunc(\ne(\game_W)).
	\label{eq:equivalent}
\end{align}
Next, we 
relate the optimal partition and the NE of the two games through the
following theorem.

\begin{theorem}[Optimal partitions and Nash equilibria]\label{th:opt_subset_ne}
Given the problem in \labelcref{eq:partition_optimization} , $\opt \subseteq \setfunc(\ne(\game_W))$, where
\begin{align}
	\opt \ldef \{\salloc^* \,\,|\,\, \salloc^* \,\, \mathrm{is\,\, a\,\, solution\,\, to\,\,}\labelcref{eq:partition_optimization} \}.
	\label{eq:opt}
\end{align}
\end{theorem}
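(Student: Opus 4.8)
The plan is to show that any optimal partition $\salloc^* = (\alloc_1^*, \ldots, \alloc_n^*)$ satisfies Properties~\ref{prop:part_game_dom_agt} and~\ref{prop:part_game_not_dom_agt} of Lemma~\ref{lem:nash_partition_game}, and then invoke the equivalence~\labelcref{eq:equivalent}, i.e., $\ne(\game_P) = \setfunc(\ne(\game_W))$, to conclude $\salloc^* \in \setfunc(\ne(\game_W))$. So the core of the argument reduces to a characterization of optimal partitions that matches the NE characterization of $\game_P$.

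First I would observe that, because the objective $J(\salloc) = \sum_{i}\sum_{q \in \alloc_i} f_i(q)$ decomposes additively over tasks (each task $q$ contributes exactly $f_i(q)$ where $i$ is the unique agent with $q \in \alloc_i$, using that a partition assigns each task to exactly one agent), maximizing $J$ over partitions is equivalent to, for each task $q \in \tsk$ independently, assigning $q$ to some agent maximizing $f_i(q)$ — that is, to a dominating agent for $q$. This gives a clean per-task decoupling: $\salloc^* \in \opt$ if and only if for every $q \in \tsk$, the unique $i$ with $q \in \alloc_i^*$ is a dominating agent for $q$. From this, Property~\ref{prop:part_game_dom_agt} is immediate: for each $q$ there is an $i$ dominating for $q$ with $q \in \alloc_i^*$. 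Property~\ref{prop:part_game_not_dom_agt} is the contrapositive: if $j$ is not dominating for $q$, then $q \notin \alloc_j^*$, since the (unique) agent holding $q$ must be dominating.

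Then I would apply Lemma~\ref{lem:nash_partition_game} to conclude $\salloc^* \in \ne(\game_P)$, and finally use~\labelcref{eq:equivalent} (which in turn rests on Lemma~\ref{lem:nash_weight_game} and the definition of $\setfunc$ in~\labelcref{eq:setfunc}) to get $\salloc^* \in \setfunc(\ne(\game_W))$. Since this holds for every $\salloc^* \in \opt$, we obtain $\opt \subseteq \setfunc(\ne(\game_W))$, as desired.

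The only subtlety — and the one point I would be careful about — is the interface between partitions (where each task is assigned to exactly one agent) and the strategy profiles of $\game_P$ (where an agent's strategy is an arbitrary subset $\alloc_i \in 2^\tsk$, with no partition constraint). A partition is in particular a valid strategy profile of $\game_P$, so checking it lies in $\ne(\game_P)$ via Lemma~\ref{lem:nash_partition_game} is legitimate; I just need to make sure the verification of Properties~\ref{prop:part_game_dom_agt}-\ref{prop:part_game_not_dom_agt} only uses the partition structure of $\salloc^*$, which it does. I do not expect any genuine obstacle here; the main work is simply articulating the per-task decoupling of $J$ cleanly, and the result follows. (Note that Assumption~\ref{asmp:not_all_dominating} is not needed for this inclusion — it would only matter for a reverse-type statement about uniqueness of the assigned dominating agent.)
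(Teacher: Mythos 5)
Your proposal is correct and follows essentially the same route as the paper's proof: reduce to showing $\salloc^* \in \ne(\game_P)$ via the two properties of Lemma~\ref{lem:nash_partition_game}, then transfer to $\game_W$ through~\labelcref{eq:equivalent}. The only difference is that you spell out the per-task additive decoupling of $J$ that the paper dismisses as ``easy to see,'' which is a welcome elaboration but not a different argument.
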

\begin{proof}
	By~\labelcref{eq:equivalent}, we can equivalently show that $\opt \subseteq \ne(\game_P)$. Let $\salloc^* \in \opt$. It is easy to see that $q \in \alloc^*_i$ only if $i \in \agt$ is a dominating agent for task $q$. Moreover, if $j \in \agt$ is not dominating for $q$, then $q \notin \alloc^*_j$. Then from Lemma~\ref{lem:nash_partition_game}, $\salloc^* \in \ne(\game_P)$. The rest follows from~\labelcref{eq:equivalent}.
\end{proof}

The above result states that if an agent $i \in \agt$ is assigned tasks using the translated support of the NE of $\game_W$, this set is a superset of the optimizers of~\labelcref{eq:partition_optimization}. 
The extra solutions arise when there are non-unique dominating agents for a task. When there are  unique dominating agents, the next result shows there is a unique NE for $\game_W$. This follows from Lemma~\ref{lem:nash_weight_game} immediately, so we skip a formal proof.

\begin{corollary}[Uniqueness of Nash equilibria] \label{col:nash_wt_game_unique_dominate}
Suppose that for each $q \in \tsk$, $i^*_q$ is the unique dominating agent for task $q$. 
Then $\ne(\game_W) = \{\hat{\W}\}$ where, for each $q \in \tsk$, $\hat{\W}$ satisfies
	1) $\hat{w}^q_{i^*_q} = 1$, and 2) $\hat{w}^q_j = 0$, $\forall \, j \neq i^*_q$. 
Further, $\salloc^* = \setfunc(\hat{\W})$ is the unique solution to~\labelcref{eq:partition_optimization}.
\proofend
\end{corollary}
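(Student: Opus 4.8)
The plan is to obtain the corollary directly from the characterization in Lemma~\ref{lem:nash_weight_game}, together with Theorem~\ref{th:opt_subset_ne}. First I would instantiate Lemma~\ref{lem:nash_weight_game} under the unique-dominating-agent hypothesis. Fix $q \in \tsk$. Property~\ref{prop:wt_game_dom_agent} guarantees some dominating agent $i$ for $q$ with $\hat{w}^q_i = 1$; since $i^*_q$ is by assumption the \emph{only} dominating agent for $q$, this forces $i = i^*_q$, i.e.\ $\hat{w}^q_{i^*_q} = 1$. Property~\ref{prop:wt_game_not_dom_agent}, applied to each $j \neq i^*_q$ (all of which are non-dominating for $q$), forces $\hat{w}^q_j = 0$. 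As $q$ was arbitrary, every element of $\ne(\game_W)$ must coincide with the matrix $\hat{\W}$ in the statement, so $\ne(\game_W) \subseteq \{\hat{\W}\}$. For the reverse inclusion, $\hat{\W}$ satisfies Properties~\ref{prop:wt_game_dom_agent}--\ref{prop:wt_game_not_dom_agent} by construction, with witness $i^*_q$ for each $q$, so the sufficiency direction of Lemma~\ref{lem:nash_weight_game} gives $\hat{\W} \in \ne(\game_W)$; here Assumption~\ref{asmp:not_all_dominating} is what ensures a non-dominating agent exists for each task, so that the two properties are not in conflict and $\hat{\W} \in [0,1]^{n\times m}$ is a well-defined strategy. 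Hence $\ne(\game_W) = \{\hat{\W}\}$.

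Next I would deduce the optimization statement. Set $\salloc^* \ldef \setfunc(\hat{\W})$, with $\setfunc$ the support map in~\labelcref{eq:setfunc}; explicitly $\alloc^*_i = \{q \in \tsk \mid i^*_q = i\}$, so the blocks $\alloc^*_1,\dots,\alloc^*_n$ are pairwise disjoint and cover $\tsk$, i.e.\ $\salloc^*$ is feasible for~\labelcref{eq:partition_optimization}. By Theorem~\ref{th:opt_subset_ne}, $\opt \subseteq \setfunc(\ne(\game_W)) = \setfunc(\{\hat{\W}\}) = \{\salloc^*\}$. Finally, the feasible set of~\labelcref{eq:partition_optimization} --- the ordered partitions of the finite set $\tsk$ into $n$ (possibly empty) blocks, a set of size $n^m$ --- is finite and nonempty and $J$ is real-valued, so the maximum is attained and $\opt \neq \varnothing$. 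Combined with $\opt \subseteq \{\salloc^*\}$, this yields $\opt = \{\salloc^*\}$, which is precisely the assertion that $\salloc^*$ is the unique solution to~\labelcref{eq:partition_optimization}.

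There is no genuine obstacle here --- the statement is a specialization of results already proved --- but the one point that needs care is not silently dropping the non-emptiness of $\opt$: the inclusion $\opt \subseteq \{\salloc^*\}$ on its own is also consistent with $\opt = \varnothing$, and it is the finiteness of the feasible set (equivalently, of the domain of $J$) that promotes the inclusion to the asserted equality. A minor secondary point worth a sentence in the write-up is the role of Assumption~\ref{asmp:not_all_dominating}, which makes ``$i^*_q$ is the unique dominating agent'' compatible with the presence of non-dominating agents and hence makes $\hat{\W}$ well-defined.
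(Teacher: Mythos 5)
Your proposal is correct and follows exactly the route the paper intends: the paper omits a formal proof precisely because, as you show, Properties~\ref{prop:wt_game_dom_agent} and~\ref{prop:wt_game_not_dom_agent} of Lemma~\ref{lem:nash_weight_game} pin down $\hat{\W}$ uniquely once each task has a single dominating agent, and Theorem~\ref{th:opt_subset_ne} then localizes $\opt$ inside $\{\salloc^*\}$. Your added observation that non-emptiness of $\opt$ (from finiteness of the feasible set) is needed to upgrade the inclusion to equality is a legitimate and worthwhile point of care that the paper leaves implicit.
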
 

In general, $\ne(\game_W)$ is a superset of the set
of optimal partitions. The next example makes this clear.

\begin{example}[Optimal partitions and Nash equilibria]\label{ex:superset_structure}
Let $\agt = \{1,2\}$ and $\tsk = \{\tskind{a},\tskind{b}\}$. Assume the values $f_1(\tskind{a}) = f_2(\tskind{a}) = 0.5$, $f_1(\tskind{b}) = 0.7$ and $f_2(\tskind{b}) = 0.3$. Then, $\opt = \{(\{\tskind{a},\tskind{b}\},\varnothing),(\{\tskind{b}\},\{\tskind{a}\})\}$; $\ne(\game_P) = \{(\{\tskind{a},\tskind{b}\},\varnothing),(\{\tskind{a},\tskind{b}\},\{\tskind{a}\}),(\{\tskind{b}\},\{\tskind{a}\})\}$; and  
\begin{align*}
	\ne(\game_W) = \left\{
	\begin{bmatrix}
		1 & 1\\
		\lambda & 0 
	\end{bmatrix},
	\begin{bmatrix}
		1 & 1\\
		1 & 0
	\end{bmatrix},
	\begin{bmatrix}
		\mu & 1\\
		1 & 0
	\end{bmatrix} \right\},
\end{align*}
where $\lambda,\mu$ can independently take any value in $[0,1)$. Thus, in this case $\opt \subsetneq \ne(\game_P) = \setfunc(\ne(\game_W))$. Interestingly, note that there is an optimal partition in which agent $2$ does not get any task. 
\bulletend
\end{example}

Next, we design a dynamical system using which the agents can figure
out the optimal partition on their own.

\section{Best Response Projected Gradient Ascent} \label{sec:grad_ascent}

From the previous section, we know that if the agents play the weight
game $\game_W$, then the NE form a superset of the
optimal task partition (with slight abuse of notation). Thus, here we
let the agents update their weights (from any initial feasible weight)
using the gradient of their utility while assuming the others do not
change their weights. For such a dynamical system, we aim to relate
its equilibria to the NE of $\game_W$ and hence also
relate it to the set $\opt$ of optimal solutions
to~\labelcref{eq:partition_optimization}. Now, from~\labelcref{eq:agent_reward_max}, it can be seen that, 
\begin{align}
	\frac{\partial}{\partial w^q_i}U_i = f_i(q) - \max_{j \in \agt \setminus \{i\}}f_j(q)w^q_j \, \rdef u^q_i (\w^q)\,.
	\label{eq:partial_ui} 
\end{align}
Thus, the weights are updated using the following dynamics:
\begin{align}
	w^q_i(t+1) = \big[ w^q_i(t) + \gamma^q_i \, u^q_i(\w^q(t)) \big]_0^1, 
	\label{eq:weight_dynamics_max}
\end{align}
with $\gamma^q_i \in \real_{>0}$,
$\forall i \in \agt, \forall q \in \tsk$. 
%
%
We call this the projected best response ascending gradient dynamics (\dynacr).
From~\labelcref{eq:partial_ui} and~\labelcref{eq:weight_dynamics_max}, note
that in order to compute the weight updates, each agent $i \in \agt$
needs to know $f_j(q)w^q_j$, for all $j \neq
i$. This requires that each agent must talk to every
other agent  to compute its own gradient.  The equilibrium
points of this dynamics is given by

\vspace{-.3cm}
\begin{align}
	\notag & \eqpt \ldef  \Big\{ \W \in \kcell^{n \times m} \ \vline \  \Big[w^q_i + \gamma^q_i \, u^q_i(\w^q) \Big]_0^1 = w^q_i,\\ 
	& \hspace{15em} \forall i \in \agt, \forall q \in \tsk \Big\}.
	\label{eq:eqpt}
\end{align}
For this, the following result can be stated immediately.

\begin{lemma}[Equilibrium weights are Nash equilibria] \label{lem:max_dyn_eq}
The weight matrix $\bar{\W} \in \eqpt$ if and only if $\bar{\W} \in \ne(\game_W)$.
\end{lemma}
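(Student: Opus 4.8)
The plan is to establish the equivalence by characterizing the equilibrium condition $[w^q_i + \gamma^q_i u^q_i(\w^q)]_0^1 = w^q_i$ pointwise in $(i,q)$, and then matching it against Properties~\ref{prop:wt_game_dom_agent}--\ref{prop:wt_game_not_dom_agent} of Lemma~\ref{lem:nash_weight_game}. The key observation is that the clamped fixed-point equation $[w + \gamma u]_0^1 = w$ with $\gamma > 0$ holds if and only if one of three mutually exclusive situations occurs: (a) $w = 1$ and $u \geq 0$; (b) $w = 0$ and $u \leq 0$; or (c) $0 < w < 1$ and $u = 0$. I would first state and quickly justify this elementary fact about the projection onto $[0,1]$, since it is the workhorse of the whole argument.

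Next, I would translate condition (a)--(c) into statements about $u^q_i(\w^q) = f_i(q) - \max_{j \neq i} f_j(q) w^q_j$. For the ``only if'' direction, suppose $\bar\W \in \eqpt$. Fix a task $q$. Let $i^*$ be a dominating agent for $q$ (which exists since not all agents can fail to dominate). I would argue that $\bar w^q_{i^*}$ cannot be less than $1$: if $0 < \bar w^q_{i^*} < 1$ then $u^q_{i^*} = 0$ would force $f_{i^*}(q) = \max_{j\neq i^*} f_j(q) \bar w^q_j \leq \max_{j \neq i^*} f_j(q) \leq f_{i^*}(q)$, so equality holds throughout and some other agent $j$ also attains $f_j(q) = f_{i^*}(q)$ with $\bar w^q_j = 1$ — i.e., Property~\ref{prop:wt_game_dom_agent} already holds via $j$; and if $\bar w^q_{i^*} = 0$ then $u^q_{i^*} \leq 0$ gives the same conclusion. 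So in all cases some dominating agent has weight $1$, establishing Property~\ref{prop:wt_game_dom_agent}. For Property~\ref{prop:wt_game_not_dom_agent}, take $j$ not dominating for $q$; then $f_j(q) < \max_k f_k(q) = f_{i^*}(q)$ where $i^*$ has $\bar w^q_{i^*} = 1$, hence $u^q_j = f_j(q) - \max_{k \neq j} f_k(q)\bar w^q_k \leq f_j(q) - f_{i^*}(q) < 0$, which by (a)--(c) forces $\bar w^q_j = 0$.

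For the ``if'' direction, suppose $\bar\W$ satisfies Properties~\ref{prop:wt_game_dom_agent}--\ref{prop:wt_game_not_dom_agent}; I must check each $(i,q)$ satisfies the fixed-point equation. Fix $q$ and let $i^*$ be the dominating agent with $\bar w^q_{i^*} = 1$. If $\bar w^q_i = 0$ (the case for all non-dominating agents, and possibly some dominating ones), then $u^q_i = f_i(q) - \max_{k\neq i} f_k(q)\bar w^q_k \leq f_i(q) - f_{i^*}(q) \leq 0$ when $i \neq i^*$, matching case (b); and when $i = i^*$ we are in the $\bar w^q_i = 1$ situation instead. If $\bar w^q_i = 1$, then $i$ must be dominating (by Property~\ref{prop:wt_game_not_dom_agent}), so $f_i(q) = f_{i^*}(q)$, giving $u^q_i = f_i(q) - \max_{k\neq i} f_k(q)\bar w^q_k \geq f_i(q) - f_{i^*}(q) = 0$ (using that $\max_{k \neq i} f_k(q)\bar w^q_k \leq \max_k f_k(q) = f_i(q)$), matching case (a). If $0 < \bar w^q_i < 1$, then again $i$ is dominating (Property~\ref{prop:wt_game_not_dom_agent}), and since some $i^* \neq i$ has $\bar w^q_{i^*} = 1$ with $f_{i^*}(q) = f_i(q)$, we get $u^q_i = f_i(q) - f_{i^*}(q) = 0$, matching case (c). Hence every entry satisfies the fixed-point condition, so $\bar\W \in \eqpt$.

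The main obstacle is the bookkeeping around non-unique dominating agents: one must be careful that ``$\exists$ a dominating agent with weight $1$'' is the right invariant and that an interior weight $0 < \bar w^q_i < 1$ can only occur for an agent that ties for the maximum while another tied agent carries weight $1$. Once the three-case projection lemma is in hand, the rest is a routine sign-chasing through $u^q_i$; I would keep it compact by noting symmetry between the necessity and sufficiency arguments rather than rederiving each inequality from scratch.
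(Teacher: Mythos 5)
Your proof is correct and follows essentially the same route as the paper: both reduce membership in $\eqpt$ to Properties~\ref{prop:wt_game_dom_agent}--\ref{prop:wt_game_not_dom_agent} of Lemma~\ref{lem:nash_weight_game} and then sign-chase $u^q_i$ through the clamped fixed-point condition (your explicit trichotomy for $[w+\gamma u]_0^1 = w$ is left implicit in the paper, and your direct case split on a single dominating agent replaces the paper's argument by contradiction, to the same effect). The only point worth flagging is that your step ``$f_j(q)\bar w^q_j = f_{i^*}(q)$ forces $\bar w^q_j = 1$'' silently uses $\max_i f_i(q) > 0$, which is guaranteed by Assumption~\ref{asmp:not_all_dominating} together with $f_i \geq 0$; the paper's strict inequality at the analogous point relies on the same fact.
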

\begin{proof}
  We prove this by showing that $\bar{\W} \in \eqpt$ if and only if
  $\Wb$ follows Properties~\ref{prop:wt_game_dom_agent}
  and~\ref{prop:wt_game_not_dom_agent} of
  Lemma~\ref{lem:nash_weight_game}.
Suppose that $\bar{\W} \in \eqpt$ and consider an arbitrary but fixed $q \in \tsk$. 
We prove Property~\ref{prop:wt_game_dom_agent} by contradiction and
assume that $\forall i \in \agt$ dominating for $q$, $\bar{w}^q_i <
1$. Now, for any dominating agent $i^*_q \in \agt$,
$\max_{j \in \agt \setminus \{i^*_q\}}f_j(q)\wbar^q_j < f_{i^*_q}(q)$.
Thus, $u^q_{i^*_q}(\wb^q) > 0$. Since $\wbar^q_{i^*_q} < 1$ and
$\gamma^q_{i^*_q} > 0$, this contradicts $\Wb \in \eqpt$.

Next we prove Property~\ref{prop:wt_game_not_dom_agent} also by
contradiction. Suppose that $\exists \,j \in \agt$ not dominating for
task $q$ but $\bar{w}^q_j > 0$. Due to
Property~\ref{prop:wt_game_dom_agent}, let $i^*_q \in \agt$ be the
dominating agent for task $q$ such that $\wbar^q_{i^*_q} = 1$. Then,
\begin{align*}
  u^q_j(\wb^q) = f_j(q) - \max_{k \in \agt \setminus \{j\}} f_k(q) \wbar^q_k = f_j(q) - f_{i^*_q}(q) < 0\,.
\end{align*}
Again, as $\bar{w}^q_j > 0$ and $\gamma^q_j > 0$, this contradicts
$\Wb \in
\eqpt$. 

To show sufficiency, let $\Wb$ satisfy
Properties~\ref{prop:wt_game_dom_agent}
and~\ref{prop:wt_game_not_dom_agent}. Then it is easy to see that for
each task $q \in \tsk$, $u^q_i(\wb^q) \geq 0$ if $i \in \agt$ is dominating
for $q$ with $\wbar^q_i = 1$, $u^q_i(\wb^q) = 0$ if $i \in \agt$ is dominating
for $q$ with $\wbar^q_i < 1$, and $u^q_j(\wb^q) < 0$ if $j \in \agt$ is not dominating for $q$ (hence $\wbar^q_j = 0$). Then, $\Wb \in \eqpt$ follows since $\gamma^q_j >
0$. 
\end{proof}

From Lemma~\ref{lem:max_dyn_eq}, we can also infer that if there is a
unique dominating agent, then the equilibrium set becomes a singleton
and follows the same structure as in
Corollary~\ref{col:nash_wt_game_unique_dominate}.

In what follows, we show that starting from any initial weights, the
dynamics~\labelcref{eq:weight_dynamics_max} converges to an equilibrium.

\begin{theorem}[\dynacr\, converges to an equilibrium weight]\label{th:wt_converge}
Suppose Assumption~\ref{asmp:not_all_dominating} holds.
  Consider the dynamics~\labelcref{eq:weight_dynamics_max} with an initial
  condition $\W(0) \in \kcell^{n \times m}$ and let $\W(t)$ be the 
  solution trajectory. Then $\lim_{t \to \infty} \W(t) = \Wb \in \eqpt$. 
\end{theorem}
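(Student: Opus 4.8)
The dynamics decouples across tasks: for each fixed $q \in \tsk$, the update $\w^q(t+1) = [\w^q(t) + \Gamma^q u^q(\w^q(t))]_0^1$ depends only on $\w^q(t)$ (componentwise projection onto $[0,1]^n$), where $u^q_i(\w^q) = f_i(q) - \max_{j \neq i} f_j(q) w^q_j$. So it suffices to prove convergence for a single task $q$; I will drop $q$ and write $w_i$, $f_i$, $\gamma_i$, $u_i(\w) = f_i - \max_{j\neq i} f_j w_j$. By Assumption~\ref{asmp:not_all_dominating} there is at least one non-dominating agent. The plan is to identify a total ordering of agents by $f_i$ and show the weights stabilize in finitely many ``stages,'' one per distinct reward value, from the top down.

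\emph{Step 1 (dominating agents saturate and stay).} Let $f^{\max} = \max_i f_i$ and let $i^*$ be any dominating agent ($f_{i^*} = f^{\max}$). For any other agent $j$, $f_j w_j \le f_j \le f^{\max} = f_{i^*}$, so $u_{i^*}(\w) = f_{i^*} - \max_{j\neq i^*} f_j w_j \ge 0$ always. Hence $w_{i^*}(t)$ is nondecreasing, bounded by $1$, so it converges; and once $\max_{j \neq i^*} f_j w_j < f_{i^*}$ the increment is bounded below by a fixed positive constant, forcing $w_{i^*}$ to reach $1$ in finitely many steps — \emph{unless} $\max_{j\neq i^*} f_j w_j$ equals $f_{i^*}$, which requires another dominating agent $k$ with $w_k = 1$. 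So: the quantity $M(t) := \max_i f_i w_i(t)$ is nondecreasing (each $w_i$ with $f_i = $ current max is pushed up, others are pushed toward $0$ once $f_i w_i < f_i \le M$... this needs care). The cleanest route: show $M(t) \uparrow f^{\max}$ and that in finite time at least one dominating agent hits weight $1$, after which $M(t) = f^{\max}$ exactly and is constant.

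\emph{Step 2 (non-dominating agents with reward below the running max decay to $0$).} Once $M(t) = f^{\max}$ and stays there (equivalently some dominating agent has $w=1$), any agent $j$ with $f_j < f^{\max}$ has $u_j(\w(t)) = f_j - \max_{k\neq j} f_k w_k \le f_j - f^{\max} < 0$ (the max includes the saturated dominating agent, assuming $j$ itself is not that agent; if there are $\ge 2$ dominating agents one of them is $\neq j$). The increment is then $\le -(f^{\max}-f_j)\gamma_j < 0$, a fixed negative constant, so $w_j(t)$ hits $0$ in finitely many steps and stays there (since $u_j < 0$ and $w_j = 0$ is then a fixed point of the projection). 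This disposes of every agent except (a) dominating agents and (b) — wait, there are no others once we've handled everyone with $f_j < f^{\max}$; all remaining agents are dominating. Among dominating agents, Lemma~\ref{lem:max_dyn_eq} shows any configuration with at least one $w=1$ and the rest arbitrary in $[0,1]$ is an equilibrium, provided all non-dominating agents are at $0$. But the dominating agents' weights may still be moving: once one is at $1$, the others have $u(\w) = f^{\max} - f^{\max}\cdot 1 = 0$ (taking the max over the saturated one), so their increments are exactly $0$ — they freeze wherever they are. Hence $\W(t)$ is eventually constant, and by Lemma~\ref{lem:max_dyn_eq} that constant is in $\eqpt$.

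\emph{Main obstacle.} The delicate point is Step 1: proving that $M(t) = \max_i f_i w_i(t)$ actually reaches $f^{\max}$ in \emph{finite} time (not merely converges to it), handling the case where several dominating agents are simultaneously climbing and the case where a dominating agent's increment is $u_{i^*} = f^{\max} - f_k w_k$ with $f_k w_k$ itself approaching $f^{\max}$ from below — one must show this latter quantity is in fact bounded away from $f^{\max}$ until some $w_k$ equals $1$, using that each climbing dominating agent's weight increases by a uniform step. I would argue: let $\delta$ be a lower bound on $f_{i^*}\gamma_{i^*}$ over dominating agents $i^*$; as long as \emph{no} dominating agent has weight $1$, each dominating agent $i^*$ satisfies $f_{i^*} w_{i^*} \le f^{\max}(1 - \text{something})$... more carefully, $\max_{j\neq i^*} f_j w_j \le \max(f^{\max}\cdot\max_{k\neq i^*, k\text{ dom}} w_k,\ \text{second-largest }f)$, and since all dominating weights are $<1$ and increasing by $\ge$ a fixed step, in finitely many steps one reaches $1$. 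Also I must separately treat the edge case of a \emph{unique} dominating agent $i^*$ (then for $j \neq i^*$, $u_{i^*} \ge f^{\max} - \max(\text{second-largest } f_j) > 0$ immediately, giving a uniform positive step from the start — cleaner). Once finiteness in Step 1 is secured, Steps 2–3 are straightforward finite-time arguments with uniform step bounds, and the theorem follows by concatenating the finitely many finite-time stages (at most one per distinct value of $f_i$, top down) and invoking Lemma~\ref{lem:max_dyn_eq}.
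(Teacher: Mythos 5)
Your overall skeleton (per-task decoupling, monotonicity of the dominating agents' weights, then decay of the non-dominating agents) matches the paper's proof, but the step you yourself flag as the ``main obstacle'' is a genuine gap, and your sketched resolution of it is false. You claim that as long as no dominating agent has weight $1$, ``each climbing dominating agent's weight increases by $\geq$ a fixed step,'' so that some dominating agent saturates in finite time. This fails whenever a task has two or more dominating agents. Concretely, take one task with agents $1,2$ dominating ($f_1=f_2=f^{\max}$) and a third non-dominating agent at weight $0$; set $w_1(0)=w_2(0)=\tfrac12$ and $\gamma_1 f^{\max}=\gamma_2 f^{\max}=\tfrac12$. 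Then $u_i(\w)=f^{\max}(1-w_{3-i})$, so $1-w_i(t+1)=\tfrac12\,(1-w_i(t))$: both weights converge to $1$ geometrically and \emph{never} reach $1$ in finite time, and the increment $\gamma_i f^{\max}(1-w_{3-i}(t))$ is not bounded below. Consequently your ``finitely many stages, one per distinct reward value'' architecture cannot be made to work as stated, and Steps 2--3, which are predicated on a dominating agent being \emph{exactly} at $1$ (``once one is at $1$ \dots the others freeze''), inherit the problem: in the example above no agent ever freezes.

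The theorem only claims asymptotic convergence, and the paper proves exactly that: after establishing monotone convergence of each dominating agent's weight, it applies a LaSalle-type invariance argument to $V(\w^q)=d(\{w^q_i\}_{i\in\I^q},\X)$ with $\X=\{\v\,|\,v_j=1 \text{ for some dominating } j\}$, concluding only that $\max$ of the dominating weights \emph{tends to} $1$; for a non-dominating agent $j$ it then uses that $f_i(q)w^q_i(t)\to f_i(q)$ implies $u^q_j(\w^q(t))\le -\nu<0$ for all large $t$, which kills $w^q_j$ without ever needing exact saturation. Your argument can be repaired along the same lines --- replace ``some dominating weight equals $1$'' by ``some dominating weight is within $\nu/f^{\max}$ of $1$,'' which does happen in finite time and suffices for Step 2, and replace ``the remaining dominating agents freeze'' by ``they are monotone and bounded, hence converge, and the limit satisfies the characterization of Lemma~\ref{lem:max_dyn_eq}'' --- but as written the finite-time claims are not correct.
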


\begin{proof}
  Notice that for the dynamics~\labelcref{eq:weight_dynamics_max}, the
  weight associated with each task evolves independently from the
  weights associated with other tasks. Thus, consider an arbitrary but
  fixed $q \in \tsk$. Next, consider any $i \in \agt$ that is dominating for
  $q$. From~\labelcref{eq:partial_ui}, $u^q_i(\w^q) \geq 0$,
  $\forall \, \w^q \in [0,1]^n$. Thus, since $\gamma^q_i > 0$,
  $\forall i \in \agt$, $w^q_i(t)$ is
  non-decreasing.
  Hence, $w^q_i(t) \to \what^q_i \in [0,1]$ as
  $t \to \infty$, since $[0,1]$ is compact. Now consider $\I^q \ldef \{j \in \agt\,|\, j \,\, \mathrm{is \,\, dominating \,\, for}\,\, q\}$, the set $\X \ldef \{\v \in [0,1]^{|\I^q|} \,|\, v_j = 1\,\, \mathrm{for\,\, some}\,\, j \in \I^q \}$ and define the continuous function $V(\w^q) \ldef d(\{w^q_i\}_{i \in \I^q}, \X)$. 
It is clear that 
$V(\w^q(t+1)) \leq V(\w^q(t))$ $\forall t \in \intpos$. Applying the LaSalle invariance principle, there is convergence to the largest invariant set in $V(\w(t)) = V(\w(t+1))$ for all $t$. We argue this set is necessarily $\X$. Otherwise, invariance implies that $u^q_i(\w^q) = 0$ for any dominating agent $i \in \agt$. However, this occurs if and only if
%
%
  $\exists \,i' \in \agt$, $i\neq i'$, another dominating agent for
  task $q$ such that $w^q_{i'} = 1$; 
  otherwise, $u^q_i(\w^q) > 0$. 
  Thus, $\{w^q_i(t)\}_{i \in \I^q} \to \X$ as $t \to \infty$ . This along with previous discussion proves that $\what^q_i$ follows Property~\ref{prop:wt_game_dom_agent}
%
%
  of Lemma~\ref{lem:nash_weight_game}.  

  Next consider any $j \in \agt$ that is not dominating for $q$. From the previous part of the proof, we know that there is
  a $i \in \agt$ dominating for $q$ for which
  $w^q_i(t) \to 1$ and thus $f_i(q)w^q_i(t) \to f_i(q)$ as
  $t \to \infty$. 
This implies that
  $\exists \,\tau \in \intpos$ such that
  $\max_{k \in \agt \setminus \{j\}}f_k(q)w^q_k(t) \geq f_j(q) +
  \nu$, for some $\nu > 0$ and $\forall t \geq
  \tau$. Then, as $u^q_j(\w^q(t)) \leq -\nu < 0$, $w^q_j(t)$ is a strictly decreasing sequence (after $\tau$
  time steps). Thus, from the dynamics
  in~\labelcref{eq:weight_dynamics_max}, 
  $w^q_j(t) \to \what^q_j = 0$ as $t \to \infty$. Hence, $\what^q_j$
  follows Property~\ref{prop:wt_game_not_dom_agent}
of Lemma~\ref{lem:nash_weight_game}.
\end{proof}

When there is a unique dominating agent for each task, we can
guarantee finite-time convergence to an optimal
partition.

\begin{theorem}[\dynacr\, converges in finite
  time]\label{th:finite_time_wt_converge}
%
%
Suppose Assumption~\ref{asmp:not_all_dominating} holds and
  suppose that for each $q \in \tsk$, there exists a unique dominating
  agent, $i_q^* \in \agt$.  Define
  $\gamma \ldef \min\limits_{i \in \agt, q\in \tsk} \gamma^q_i > 0$,
  and let
  $\delta \ldef \min\limits_{q \in \tsk} \big[ f_{i^*_q}(q) -
  \max\limits_{j \neq i^*_q} f_j(q) \big] > 0$.
  Consider the dynamics~\labelcref{eq:weight_dynamics_max}
  starting from $\W(0) \in \kcell^{n \times m}$, with the solution trajectory
  $\W(t)\rightarrow \Wb \in \eqpt$, as $t\rightarrow \infty$.  Then $w^q_i(t) = \bar{w}^q_i$,
  $\forall i \in \agt$, $\forall q \in \tsk$,
  $\forall t \geq 2\left\lceil(\gamma\delta)^{-1}
  \right\rceil$. 
\end{theorem}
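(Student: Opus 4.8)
The plan is to exploit the fact that, under a unique dominating agent per task, the partial derivative $u^q_i$ in \labelcref{eq:partial_ui} is uniformly bounded away from zero (by $+\delta$ for the dominating agent and by $-\delta$ for the others), so that each coordinate of $\W$ makes linear progress toward its equilibrium value under \labelcref{eq:weight_dynamics_max} and saturates after finitely many steps. Since the dynamics decouple across tasks (the weight of task $q$ evolves independently of the others, as already noted in the proof of Theorem~\ref{th:wt_converge}), I fix an arbitrary $q \in \tsk$ and analyze $\w^q(t)$; by Theorem~\ref{th:wt_converge} together with Corollary~\ref{col:nash_wt_game_unique_dominate}, the limit $\Wb$ satisfies $\wbar^q_{i^*_q} = 1$ and $\wbar^q_j = 0$ for all $j \neq i^*_q$, so it suffices to show these exact values are reached by time $2\lceil(\gamma\delta)^{-1}\rceil$.

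\emph{Phase 1 (the dominating agent saturates).} For the dominating agent $i^*_q$, using $w^q_j(t) \le 1$ for all $j$ and the definition of $\delta$, I would show $u^q_{i^*_q}(\w^q(t)) = f_{i^*_q}(q) - \max_{j \neq i^*_q} f_j(q)w^q_j(t) \ge f_{i^*_q}(q) - \max_{j \neq i^*_q} f_j(q) \ge \delta > 0$ for every $t \in \intpos$. Hence, as long as $w^q_{i^*_q}(t) < 1$, the projection $[\cdot]_0^1$ in \labelcref{eq:weight_dynamics_max} does not bind from below, and the new value is $\min\{w^q_{i^*_q}(t) + \gamma^q_{i^*_q} u^q_{i^*_q}(\w^q(t)),\,1\}$, which either equals $1$ or exceeds $w^q_{i^*_q}(t)$ by at least $\gamma\delta$; once $w^q_{i^*_q}$ equals $1$, nonnegativity of the gradient keeps it there. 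Since $w^q_{i^*_q}(0) \ge 0$, a count gives $w^q_{i^*_q}(t) = 1$ for all $t \ge \lceil (\gamma\delta)^{-1}\rceil =: T_1$.

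\emph{Phase 2 (non-dominating agents vanish).} Fix $j \neq i^*_q$. For $t \ge T_1$ we have $w^q_{i^*_q}(t) = 1$, and since $i^*_q \in \agt \setminus \{j\}$ this yields $\max_{k \neq j} f_k(q)w^q_k(t) \ge f_{i^*_q}(q)$; therefore $u^q_j(\w^q(t)) \le f_j(q) - f_{i^*_q}(q) \le -\delta < 0$, again by the definition of $\delta$. Hence for $t \ge T_1$ the upper projection does not bind, and whenever $w^q_j(t) > 0$ the update decreases it by at least $\gamma\delta$, with $0$ absorbing. As $w^q_j(T_1) \le 1$, I conclude $w^q_j(t) = 0$ for all $t \ge T_1 + \lceil(\gamma\delta)^{-1}\rceil = 2\lceil(\gamma\delta)^{-1}\rceil$. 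Taking the maximum over all $q \in \tsk$ and all agents — the bound being uniform because $\gamma$ and $\delta$ are defined as minima over $\agt$ and $\tsk$ — gives $\W(t) = \Wb$ for all $t \ge 2\lceil(\gamma\delta)^{-1}\rceil$, and by Lemma~\ref{lem:max_dyn_eq} this $\Wb$ indeed lies in $\eqpt$.

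I do not anticipate a serious obstacle; the argument is essentially a two‑phase ``linear progress plus saturation'' count. The one point requiring care is the ordering: Phase 2 genuinely needs $w^q_{i^*_q}$ to have already reached \emph{exactly} $1$, not merely to be large, which is why the bound is $2\lceil(\gamma\delta)^{-1}\rceil$ rather than $\lceil(\gamma\delta)^{-1}\rceil$; and the bookkeeping that the clamp $[\cdot]_0^1$ neither stalls Phase 1 (the gradient stays $\ge \delta$, so the cap at $1$ is only ever attained, never undone) nor Phase 2 (the cap at $0$ is absorbing since the gradient stays $\le -\delta$). It should also be stressed that $\delta > 0$ is precisely the quantitative content of the unique‑dominating‑agent hypothesis, and it is what makes the convergence finite‑time rather than merely asymptotic.
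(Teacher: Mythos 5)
Your proof is correct and follows essentially the same two-phase argument as the paper: first the dominating agent's gradient is bounded below by $\delta$ so its weight saturates at $1$ within $\left\lceil(\gamma\delta)^{-1}\right\rceil$ steps, and then the non-dominating agents' gradients are bounded above by $-\delta$ so their weights reach $0$ within another $\left\lceil(\gamma\delta)^{-1}\right\rceil$ steps. Your explicit justification of why $u^q_j \leq -\delta$ only after the dominating agent has saturated (and why this forces the additive, rather than single, count) is the same bookkeeping the paper performs via its intermediate time $\tau$.
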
 
\begin{proof}
  From Lemma~\ref{lem:max_dyn_eq} and
  Corollary~\ref{col:nash_wt_game_unique_dominate}, it is clear that
  $\eqpt$ is a singleton set. Let $ \Wb \in \eqpt$ be the unique
  equilibrium point. From Theorem~\ref{th:wt_converge}, we know that
  $\W(t) \to \Wb$ as $t \to \infty$. 
  From~\labelcref{eq:partial_ui},
    $u^q_{i^*_q}(\w^q(t)) \geq \delta > 0, 
    \forall t \in \intpos$
  and hence from~\labelcref{eq:weight_dynamics_max}, $\forall t\in \intpos$,
    $w^q_{i^*_q}(t+1)
    \geq [ w^q_{i^*_q}(t) + \gamma^q_{i^*_q} \, \delta ]_0^1 \geq [ w^q_{i^*_q}(0) + (t+1)\, \gamma^q_{i^*_q} \, \delta ]_0^1$.
  The inequality holds since $[\cdot]_0^1$ is a nondecreasing function.
  Thus,
  $w^q_{i^*_q}(t) = 1$, for all
     $t \geq \left\lceil(\gamma\delta)^{-1} \right\rceil \geq (\gamma\delta)^{-1} \geq [1 - w^q_{i^*_q}(0)] [\gamma^q_{i^*_q} \delta]^{-1}$.
Now define $\tau \ldef \left\lceil(\gamma\delta)^{-1} \right\rceil$ and
consider any $j \neq i^*_q$ and notice from~\labelcref{eq:partial_ui} that
  $u^q_j(\w^q(t)) \leq -\delta < 0, 
  \forall t \geq \tau$.
Thus, $w^q_j(t) \leq [ w^q_j(\tau) - t\, \gamma^q_j \, \delta ]_0^1, \forall t \geq \tau.$
The inequality again holds since $[\cdot]_0^1$ is non decreasing. So, $w^q_j(t) = 0$, for all
	$t \geq \tau + \left\lceil(\gamma \delta)^{-1}\right\rceil \geq \tau + [\gamma^q_j \delta ]^{-1} \geq \tau + w^q_j(\tau) [\gamma^q_j \delta]^{-1}$.	
\end{proof}

\begin{remark}[On the effect of step-size on convergence]
  \label{rem:finite_time_two_steps} {\rm By
    Theorem~\ref{th:wt_converge},~\labelcref{eq:weight_dynamics_max}
    converges to an equilibrium weight when $\gamma^q_i > 0$,
    $\forall i \in \agt$, $\forall q \in \tsk$. Thus agents can choose
    any constant positive step size and guarantee convergence to a NE
    of the weight game. Further inspection of
    Theorem~\ref{th:finite_time_wt_converge} leads to this interesting
    observation. Since $\delta^{-1} > 0$, the individual
    $\gamma^q_i$'s can be chosen in such a way that
    $0 < {(\gamma\delta)}^{-1} < 1$. Then
    $2\left\lceil(\gamma\delta)^{-1} \right\rceil = 2$. That is, by
    choosing a sufficiently large step size and communicating with
    every other agent, the agents can reach the NE in at most
    \emph{two} time steps. }
\bulletend
\end{remark}

In order to avoid all-to-all communication, it is possible to
adapt~\labelcref{eq:weight_dynamics_max}
introducing a consensus subroutine. 
In the next section, we utilize this idea to handle decentralization
together with unknown
rewards. 

\section{Distributed Task Allocation} \label{sec:distributed_dyn}

Here, we provide a solution to
Problem~\ref{prob:main}~(\ref{prob:dist_ne_algo}). Recall that in
Section~\ref{sec:grad_ascent}, each agent $i \in \agt$ computes
$\max_{j \neq i}f_j(q)w^q_j$ using information from all other agents.
Here, we introduce a communication graph $\grph \ldef (\agt,\edg)$
with vertex set $\agt$. The arc set $\edg$ defines the connections
between agents, with $(i,j) \in \edg$ if and only if $i \in \agt$ can
send information to $j \in \agt$. For the sake of brevity, let
$\dg \ldef \diam(\grph)$.

For such a setup, the following result gives a way to find the max and
second unique max values in a distributed fashion. This is useful in
providing a distributed \dynacr\,(d-\dynacr).  
\begin{lemma}[Agreement on the two largest variables in a network]
  \label{lem:max_consensus} 
Let $\grph$ be a
strongly connected graph and consider
\begin{subequations}
\begin{align}
	\label{eq:max_consensus_no_switch} & M^q_i(t+1) = \max_{j \in \neighb_i} M^q_j(t),\\
	\label{eq:submax_consensus_no_switch} & S^q_i(t+1) = \submax \Big\{\{S^q_j(t)\}_{j \in \neighb_i},M^q_i(t),v^q_i \Big\},
\end{align}
\label{eq:consensus_no_switch}
\end{subequations}
with initial condition $M^q_i(0) = S^q_i(0) = v^q_i \in \real_{\geq 0}$, $\forall i \in \agt$, $\forall q \in \tsk$. Then $\forall \, q \in \tsk$ and $ \forall \, i \in \agt$;
\begin{enumerate}
	\item \label{prop:max_converge} $M^q_i (t) = \max \{v^q_j\}_{j \in \agt}$, $\forall \, t \geq \dg$,
	\item \label{prop:submax_converge} $S^q_i (t) = \submax \{v^q_j\}_{j \in \agt}$, $\forall \, t \geq 2\,\dg$.
\end{enumerate}
\end{lemma}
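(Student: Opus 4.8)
The plan is to analyze the two recursions separately, starting with the max-consensus recursion~\labelcref{eq:max_consensus_no_switch}, which is standard, and then building on it to handle the second-largest recursion~\labelcref{eq:submax_consensus_no_switch}. For Property~\ref{prop:max_converge}, I would argue by a monotonicity-plus-propagation argument: since $M^q_i(t+1) = \max_{j \in \neighb_i} M^q_j(t)$ and $i \in \neighb_i$, each $M^q_i(t)$ is nondecreasing in $t$ and always equals $v^q_j$ for some $j \in \agt$; hence it is bounded above by $\max_j v^q_j$. To see the bound is attained by time $\dg$, fix the index $j^*$ achieving $\max_j v^q_j$ and take any $i \in \agt$; since $\grph$ is strongly connected there is a path from $j^*$ to $i$ of length at most $\dg = \diam(\grph)$, and an easy induction on the path length shows that the value $v^q_{j^*}$ reaches node $i$ after that many steps, i.e.\ $M^q_i(t) \geq v^q_{j^*}$ for all $t \geq \dg$. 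Combined with the upper bound this gives $M^q_i(t) = \max_j v^q_j$ for all $t \geq \dg$.

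For Property~\ref{prop:submax_converge}, the key observation is that after $\dg$ steps every node knows the true maximum $m^* \ldef \max_j v^q_j$ through the $M^q_i$ variables, so for $t \geq \dg$ the update~\labelcref{eq:submax_consensus_no_switch} effectively becomes $S^q_i(t+1) = \submax\big(\{S^q_j(t)\}_{j\in\neighb_i} \cup \{m^*, v^q_i\}\big)$ with $m^*$ a common term. I would first verify the invariant that, at every time $t$, $S^q_i(t)$ is of the form $\submax$ of a multiset of $v^q$-values that includes $v^q_i$ and (for $t \geq 1$) a copy of $m^*$ — in particular $S^q_i(t)$ is always either $m^*$ or some $v^q_j$ strictly less than $m^*$ (it can never exceed the true second max $s^* \ldef \submax\{v^q_j\}_j$, because $\submax$ of a multiset containing $m^*$ returns the largest element strictly below $m^*$ present in that multiset, which is at most $s^*$). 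Then, mirroring the max-consensus argument, I would show the value $s^*$ propagates: let $j^\dagger$ be an index with $v^q_{j^\dagger} = s^*$; for $t \geq \dg$, node $j^\dagger$'s own update sees $m^*$ (via $M^q_{j^\dagger}$) and $v^q_{j^\dagger} = s^*$, so $S^q_{j^\dagger}(t+1) \geq s^*$; and from there another $\dg$ steps suffice for $s^*$ to reach every node along a shortest path, using that $\submax$ is monotone in the sense that enlarging the multiset can only increase (weakly) the reported second-largest value once $m^*$ is guaranteed present. This yields $S^q_i(t) = s^*$ for all $t \geq 2\dg$.

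The main obstacle I anticipate is the bookkeeping around the $\submax$ operator, since it is not as well-behaved as $\max$: it is not monotone on arbitrary multisets (adding a new largest element can change the second-largest), and one must be careful that the common term $m^*$ is actually present in the relevant multiset at the right time. The clean way around this is to first pin down, for $t \geq \dg$, the structural invariant ``$m^* \in \{M^q_i(t)\} $ and $S^q_i(t) \le s^*$, with equality once a copy of $s^*$ has arrived,'' and only then run the propagation induction — because once $m^*$ is known to be in the multiset at every node, $\submax$ restricted to such multisets \emph{is} monotone (it returns $\max$ of the sub-multiset of elements strictly below $m^*$), which is exactly the property that makes the path-propagation argument go through. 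A secondary subtlety is the edge case where $s^* = m^*$ (the maximum is attained by at least two indices); there the argument is actually easier, since then the second-largest value also propagates via copies of $m^*$ itself, but it is worth noting explicitly so the statement of~\labelcref{prop:submax_converge} is unambiguous.
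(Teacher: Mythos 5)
Your proposal is correct and follows essentially the same route as the paper's proof: standard max-consensus propagation for Property~\ref{prop:max_converge}, and then, for Property~\ref{prop:submax_converge}, exploiting that once every node's multiset contains the global maximum the operator $\submax$ acts as the maximum over elements strictly below it, so that the second-largest value can propagate from a node attaining it; your explicit invariant $S^q_i(t)\le \submax\{v^q_j\}_{j\in\agt}$ and your caveat that $\submax$ is only conditionally monotone are precisely the points the paper's terser induction leaves implicit. The one discrepancy is arithmetic and is shared with (indeed hidden in) the paper: a node attaining the second-largest value locks onto it only from $t=\dg+1$ (not $t=\dg$, since its $M$-variable need not equal the maximum before time $\dg$), so the path-propagation bookkeeping as both you and the paper run it yields the bound $2\dg+1$ rather than the stated $2\dg$.
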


\begin{proof}
  We show this for an arbitrary but fixed $q \in \tsk$. Let
  $i^*_q \in \argmax \{v^q_j\}_{j \in \agt}$. Then,
  from~\labelcref{eq:max_consensus_no_switch},
  $M^q_{i^*_q}(t) = v^q_{i^*_q}$, $\forall t \in \intpos$. Thus,
  $\forall \, i^*_q \in \argmax \{v^q_j\}_{j \in \agt}$,
  $M^q_j(t) = v^q_{i^*_q}$, $\forall j \in \neigh_{i^*_q}$,
  $\forall t \geq 1$. Continuing this argument inductively proves
  Property~\ref{prop:max_converge} since $\grph$ is strongly
  connected.

To show Property~\ref{prop:submax_converge}, we use Property~\ref{prop:max_converge}. Now let $i^*_q \in \argsubmax \{v^q_j\}_{j \in \agt}$. Then, from~\labelcref{eq:submax_consensus_no_switch}, $S^q_{i^*_q}(t) = v^q_{i^*_q}$, $\forall t \geq \dg$. Thus, similarly, $\forall \, i^*_q \in \argsubmax \{v^q_j\}_{j \in \agt}$, $S^q_j(t) = v^q_{i^*_q}$, $\forall j \in \neigh_{i^*_q}$, $\forall t \geq \dg + 1$. Again, continuing this argument proves Property~\ref{prop:submax_converge} since $\grph$ is strongly connected.
\end{proof}

To compute the gradient and update the weights $w^q_i$ simultaneously, we propose the following dynamics:
\begin{subequations}
\begin{align}
  \label{eq:dist_wt_update} & \hspace{-1.0ex} w^q_i(t+1) \hspace{-0.5ex} = \hspace{-0.5ex} \Big[ w^q_i(t) + \gamma^q_i (t) \Big(\fseq^q_i(t) - \frac{1}{2} \big( M^q_i(t) + S^q_i(t) \big) \Big)   \Big]_0^1 , \\
  \label{eq:max_consensus} & \hspace{-1.0ex} M^q_i(t+1) \hspace{-0.5ex} = \hspace{-0.5ex} \switch \Big(\max_{j \in \neighb_i} M^q_j(t), e^q_i(t+1), t+1, T \Big),\\
  \notag & \hspace{-1.0ex} S^q_i(t+1) = \switch \Big(\submax \Big\{\{S^q_j(t)\}_{j \in \neighb_i},M^q_i(t),e^q_i(t) \Big\}, \\
  \label{eq:submax_consensus} & \hspace{12em} e^q_i(t+1), t+1, T \Big),\\
  \label{eq:input_injection} & \hspace{-1.0ex} e^q_i(t+1) = \switch \Big(e^q_i(t),z^q_i(t+1),t+1,T \Big)\,,
\end{align}
\label{eq:dist_pbrag}
\end{subequations}
for some $T \in \realnonnegative$ and where $\switch$ is the switching function
\begin{align}
	\switch \Big(m, \fseq,  t, T  \Big) \ldef 
	\begin{cases}
          \fseq, &\mathrm{if}\,\, t \mod T = 0,\\
          m, &\mathrm{otherwise}\,.
	\end{cases}
	\label{eq:switch}
\end{align}

\begin{remark}[d-\dynacr\, with agreement and periodic input
  injection]\label{rem:dist_dyn} {\rm Note that
    $\forall \, i \in \agt$, $\forall \, q \in \tsk$, the weight
    update in~\labelcref{eq:dist_wt_update} uses the sequence
    $\{\fseq^q_i(t)\}_{t \in \intpos}$ and a time-varying step-size
    $\gamma^q_i(t)$ instead of $f_i(q)$ and a constant step-size
    $\gamma^q_i$; respectively, as
    in~\labelcref{eq:weight_dynamics_max}. The periodic switching function
    $\switch$ ensures that $e^q_i(t)$ holds the value $\fseq^q_i(t)$
    for every $T$ time-steps. This in turn
    allows~\labelcref{eq:max_consensus} and~\labelcref{eq:submax_consensus} to
    run an agreement subroutine as~\labelcref{eq:consensus_no_switch}
    every $T$ time-steps with $v^q_i = \fseq^q_i(kT)$, for
    $k \in \intpos$. Thus, at every time-step which is a multiple of
    $T$, each agent believes that its own value is the maximum and
    corrects this belief over the next $T-1$
    time-steps.} 
\bulletend
\end{remark}

\begin{theorem}[Asymptotic behavior of d-\dynacr] 
\label{th:dist_pbrag_asymp}
Suppose Assumptions~\ref{asmp:not_all_dominating} and~\ref{asmp:convering_seq_reward} hold.
Define
\begin{align}
  \Delta^q \ldef \Big(\max_{i \in \agt}
  f_i(q) - \min_{i \in \agt} f_i(q) \Big) > 0, \, \forall q \in \tsk\,.
	\label{eq:max_minus_min}
\end{align}
Consider any $\varepsilon \in (0,1)$. Suppose $\forall t \in \intpos$,
$\gamma^q_i(t) = \alpha^q_i$, with
$0 < \alpha^q_i \leq \varepsilon \, (2\,\dg \, \Delta^q)^{-1}$,
$\forall i \in \agt$, $\forall q \in \tsk$.  Next, define
$\alpha \ldef \min_{i \in \agt, q \in \tsk} \alpha^q_i$,
$\mu^q \ldef 0.5\,(\max \{f_i(q)\}_{i \in \agt} - \submax
\{f_i(q)\}_{i \in \agt})$, and
\begin{align}
	& \mu \ldef (1-\nu) \min_{q \in \tsk}\mu^q > 0,
	\label{eq:max_minus_submax} 	
\end{align}
with $\nu \in (0,1)$.  Further, suppose
$T > 2\,\dg + (\alpha\,\mu)^{-1} + 1$.  Let $\W(t)$ be the solution trajectory
to~\labelcref{eq:dist_pbrag} starting from $\W(0) \in [0,1]^{n \times
  m}$. Then $\exists \, \tau(\W(0)) \in \intpos$ such that
$\forall\, t \geq \tau$,
\begin{enumerate}
\item \label{prop:dist_pbrag_dom_agt} $w^q_i(t) = 1$ if $i \in \agt$
  is dominating for $q \in \tsk$;
\item \label{prop:dist_pbrag_not_dom_agt} $w^q_j(t) \leq \varepsilon$
  if $j \in \agt$ is not dominating for $q \in \tsk$.
\end{enumerate}
Thus $\setfunc(\W(t))$ converges in finite number of time steps.
\end{theorem}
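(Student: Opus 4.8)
The plan is to combine the eventual accuracy of the reward estimates (Assumption~\ref{asmp:convering_seq_reward}) with the exact finite-time agreement of Lemma~\ref{lem:max_consensus}, applied separately inside each length-$T$ injection window; Assumption~\ref{asmp:not_all_dominating} enters only to make $\Delta^q$ and $\mu^q$ strictly positive. Fix $q\in\tsk$, since the $q$-slices of~\labelcref{eq:dist_pbrag} decouple, and pick $t_0\in\intpos$ so that $|\fseq^q_i(t)-f_i(q)|$ is uniformly small for all $i\in\agt$ and $t\ge t_0$, the needed smallness to be read off later from $\nu$, $\mu^q$ and $\varepsilon/\Delta^q$. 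For any window index $k$ with $kT\ge t_0$ the switch~\labelcref{eq:switch} freezes $e^q_i(t)=\fseq^q_i(kT)$ on $[kT,(k+1)T)$, so on that window~\labelcref{eq:max_consensus}--\labelcref{eq:submax_consensus} reduce to~\labelcref{eq:consensus_no_switch} with $v^q_i=\fseq^q_i(kT)$ and initial value $M^q_i(kT)=S^q_i(kT)=\fseq^q_i(kT)$; by Lemma~\ref{lem:max_consensus}, $M^q_i(t)=\max_j\fseq^q_j(kT)$ and $S^q_i(t)=\submax_j\fseq^q_j(kT)$ for every $t\in[kT+2\dg,(k+1)T)$ --- a long ``clean'' sub-window, since $T-2\dg>(\alpha\mu)^{-1}+1$.

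On the clean sub-window the update~\labelcref{eq:dist_wt_update} is a projected gradient step with frozen increment $\fseq^q_i(t)-\tfrac{1}{2}\big(\max_j\fseq^q_j(kT)+\submax_j\fseq^q_j(kT)\big)$, which as $k\to\infty$ tends to $f_i(q)-\tfrac{1}{2}\big(\max_jf_j(q)+\submax_jf_j(q)\big)$; this limit equals $\mu^q>0$ if $i$ is dominating for $q$ (then $f_i(q)=\max_jf_j(q)$) and is $\le-\mu^q<0$ if $i$ is not (then $f_i(q)\le\submax_jf_j(q)$). I would enlarge $t_0$, using the slack $\nu$, so that on every clean sub-window past $t_0$ this increment is $\ge\mu$ for dominating agents and $\le-\mu$ for the rest. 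Since the clean sub-window has more than $(\alpha\mu)^{-1}$ update steps (by the choice of $T$), a dominating weight reaches $1$ and a non-dominating weight reaches $0$ within it, and the projection $[\cdot]_0^1$ keeps them there for the remainder of that sub-window; at each reset instant $t=kT$ the increment is identically $0$ because $M^q_i(kT)=S^q_i(kT)=\fseq^q_i(kT)$, so no weight moves there.

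It remains to bound the behaviour on the $2\dg$-step transients $[kT,kT+2\dg)$, where~\labelcref{eq:consensus_no_switch} has not yet converged. The key point is that the running extrema never overshoot their limits: $M^q_i(t)\le\max_j\fseq^q_j(kT)$ and $S^q_i(t)\le\submax_j\fseq^q_j(kT)$ for all $t\ge kT+1$. Hence for a dominating $i$ the increment on the entire window is $\ge\fseq^q_i(t)-\tfrac{1}{2}\big(\max_j\fseq^q_j(kT)+\submax_j\fseq^q_j(kT)\big)\ge\mu>0$, so once $w^q_i=1$ it cannot leave $1$, even in a transient. For a non-dominating $j$ the increment is bounded in magnitude by (essentially) $\Delta^q$, so the one-step change of $w^q_j$ is at most $\alpha^q_j\Delta^q\le\varepsilon(2\dg)^{-1}$ --- which is precisely the role of the step-size bound $\alpha^q_i\le\varepsilon(2\dg\,\Delta^q)^{-1}$ --- and over the $2\dg$ transient steps $w^q_j$ therefore rises by at most $\varepsilon$ above the value $0$ it attained at the end of the previous clean sub-window, so $w^q_j(t)\le\varepsilon$ on the whole window.

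Taking $\tau$ to be the end of the second full injection window after $t_0$ then gives, for all $t\ge\tau$: $w^q_i(t)=1$ when $i$ is dominating for $q$ and $w^q_j(t)\le\varepsilon<1$ otherwise, so $\supp(\w_i(t))=\{q\in\tsk:\ i\text{ is dominating for }q\}$ is independent of $t$, making $\setfunc(\W(t))$ eventually constant, hence convergent in finitely many steps. The main obstacle is reconciling the two time scales --- the estimates $\fseq^q_i(t)$ change every step and are only asymptotically correct, while the agreement variables are reset every $T$ steps and exact only after $2\dg$ of them --- and, within that, verifying that the partial-agreement values during each transient are tight enough (no-overshoot for the dominating agents, the step-size bound for the others) that the dominating weights are pinned exactly at $1$ and the non-dominating ones stay below $\varepsilon$.
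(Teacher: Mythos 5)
Your proposal is correct and follows essentially the same route as the paper's proof: decouple by task, use Assumption~\ref{asmp:convering_seq_reward} to pick a time after which the estimates are accurate, split each length-$T$ window into a $2\dg$-step agreement transient (Lemma~\ref{lem:max_consensus}) and a clean sub-window of more than $(\alpha\mu)^{-1}$ steps that drives dominating weights to $1$ and non-dominating weights to $0$, and bound the transient rise of non-dominating weights by $2\dg\,\alpha^q_j\Delta^q\leq\varepsilon$. If anything, you are slightly more careful than the paper in making explicit the no-overshoot property of the running max/submax during the transient and the zero increment at the reset instants $t=kT$, both of which the paper uses implicitly.
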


\begin{proof}
  First note that the bounds on $\alpha^q_i$'s and $T$ are valid because of
  Assumption~\ref{asmp:not_all_dominating}. Then, we show the claims
  for an arbitrary but fixed $q \in \tsk$.
	
  Recall that because of Assumption~\ref{asmp:convering_seq_reward},
  $\exists \, \tau_0 \in \intpos$ such that
  $\forall t,t' \geq \tau_0$,
  $\fseq^q_i(t) - 0.5 \, (\fseq^q_i(t) + \fseq^q_j(t')) < \Delta^q$,
  $\forall \, i,j \in \agt$. Moreover $\tau_0$ can be chosen such that
  $\fseq^q_{i^*_q}(t) - 0.5 \, (\fseq^q_{i^*_q}(t) + \fseq^q_j(t'))
  \geq (1-\nu)\,\mu^q > 0$, for any $\nu \in (0,1)$, if
  $i^*_q \in \argmax_{i \in \agt} f_i(q)$ and $\forall j \in \agt$
  such that $j \notin \argmax_{i \in \agt} f_i(q)$.
	
  Now consider any $i^*_q \in \argmax_{i \in \agt} f_i(q)$ and any
  $\nu \in (0,1)$. Then from Remark~\ref{rem:dist_dyn} and from the
  previous discussion, it is clear that, $\forall t \geq \tau_0$,
\begin{align*}
  & \alpha^q_{i^*_q} \big(\fseq^q_{i^*_q}(t) - 0.5 \big( M^q_{i^*_q}(t) + S^q_{i^*_q}(t) \big) \big) \geq \alpha^q_i (1-\nu) \, \mu^q \geq \alpha^q_i\mu\,.
\end{align*} 
This proves Property~\ref{prop:dist_pbrag_dom_agt} of this theorem as $\alpha^q_i\mu > 0$.

Next consider any $j \notin \argmax_{i \in \agt} f_i(q)$. Note from
Remark~\ref{rem:dist_dyn} that $\forall t \geq \tau_0$ such that
$t \in \{kT+2\dg,\cdots,2kT-1\}$ for some $k \in \intpos$, $w^q_j(t)$
strictly decreases, since,
\begin{align*}
  \fseq^q_j(t) - 0.5 ( M^q_j(t) + S^q_j(t) ) \leq -(1-\nu)\mu^q < 0\,.
\end{align*} 
Consider any $t \geq \tau_0$ such that
$t \in \{kT+2\dg,\cdots,2kT-1\}$ with $k \in \intpos$. Then, since
$w^q_j(kT+2\dg-1) \leq 1$,
\begin{align*}
	w^q_j(t) \leq 1 - ((t \mod T) - 2\dg)\alpha \mu,
\end{align*}
and hence because of the bound on $T$, $w^q_j(2kT-1) = 0$. Finally,
consider any $t \geq \tau_0$ such that $t \in \{kT,\cdots,kT+2\dg-1\}$
with $k \in \intpos$. Then, since $w^q_j(kT-1) = 0$ (from previous
arguments), we have
$w^q_j(t) \leq (t \mod T) \, \alpha^q_i \, \Delta^q$. Thus combining
all these arguments proves Property~\ref{prop:dist_pbrag_not_dom_agt}.

The final claim follows from the previous ones
and~\labelcref{eq:setfunc}.
\end{proof}

Note that the previous result does not guarantee that the weights
converge. This stems from the fact that at periodic times, each agent
believes that it gets the maximum reward for each
task. 
Moreover, the previous result needs information about the limits of
the converging sequences to provide bounds for the step sizes and the
period of input injection. This can be avoided by allowing
time-varying step sizes as stated next.

\begin{theorem}[d-\dynacr\, converges to Nash equilibrium]
\label{th:dist_pbrag_converge}
Suppose Assumptions~\ref{asmp:not_all_dominating} and~\ref{asmp:convering_seq_reward} hold.
Let $\W(t)$ be the solution trajectory of~\labelcref{eq:dist_pbrag} from $\W(0) \in [0,1]^{n \times m}$, with $T > 2\,\dg + 1$ and $\forall i \in \agt$, $\forall q \in \tsk$,
\begin{align*}
	\gamma^q_i(t) = 
	\begin{cases}
		\alpha^q_i(k)>0, & \mathrm{if}\, t \in \{kT,\cdots,kT+2\dg-1\},\\
		\beta^q_i(k)>0, & \mathrm{if}\, t \in \{kT+2\dg,\cdots,2kT-1\},
	\end{cases}
\end{align*}
 with $k \in \intpos$. Further,
$\forall i \in \agt$, $\forall q \in \tsk$; take sequences
$\alpha^q_i(k) \to 0$ as $k \to \infty$ and $\beta^q_i(k) \to \infty$
as $k \to \infty$. Then $\W(t) \to \Wb \in \ne(\game_W)$ as $t \to \infty$.
\end{theorem}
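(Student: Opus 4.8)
The plan is to fix an arbitrary task $q \in \tsk$ (the weight dynamics decouple across tasks) and argue convergence of $\w^q(t)$ in two parts, mirroring the structure of the proofs of Theorem~\ref{th:wt_converge} and Theorem~\ref{th:dist_pbrag_asymp}, but exploiting the vanishing/diverging step-size schedule to upgrade the ``$w^q_j \le \varepsilon$'' conclusion of Theorem~\ref{th:dist_pbrag_asymp} to actual convergence to a point in $\ne(\game_W)$. The key structural fact, from Remark~\ref{rem:dist_dyn} and Lemma~\ref{lem:max_consensus}, is that within each period $[kT, (k{+}1)T)$ the variables $M^q_i, S^q_i$ re-initialize at $\fseq^q_i(kT)$ and, after $2\dg$ further steps, equal the true max and sub-max of $\{\fseq^q_j(kT)\}_{j\in\agt}$; by Assumption~\ref{asmp:convering_seq_reward} these converge to $\max_j f_j(q)$ and $\submax_j f_j(q)$. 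So for $k$ large, on the ``$\beta$-window'' $\{kT+2\dg,\dots,(k{+}1)T-1\}$ the increment in $w^q_i$ has sign $\sgn(f_i(q) - \tfrac12(\max_j f_j(q) + \submax_j f_j(q)))$ up to a vanishing error: strictly positive for every dominating agent (since $f_i(q)=\max_j f_j(q) > \tfrac12(\max + \submax)$ under Assumption~\ref{asmp:not_all_dominating}) and strictly negative for every non-dominating agent.

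First I would handle the non-dominating agents $j$. Pick $k_0$ so that for all $k \ge k_0$ the signed increment on the $\beta$-window is bounded above by $-c < 0$ for a fixed constant $c$ (independent of $k$), while on the short $\alpha$-window $\{kT,\dots,kT+2\dg-1\}$ the increment is at most $2\dg\,\alpha^q_j(k)\,\Delta^q$, which $\to 0$. Since $\beta^q_j(k) \to \infty$, for $k$ large enough a single $\beta$-window drives $w^q_j$ from any value in $[0,1]$ down to $0$ and the projection $[\cdot]_0^1$ pins it there for the rest of that window; then on the next $\alpha$-window it can rise by at most $2\dg\,\alpha^q_j(k{+}1)\,\Delta^q \to 0$. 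Hence $w^q_j(t) \to 0$, which is Property~\ref{prop:wt_game_not_dom_agent}. For the dominating agents $i$, on every $\beta$-window (for $k$ large) the increment is $\ge c' > 0$ before projection, so each dominating agent's weight is also driven to $1$ within one large-$k$ $\beta$-window and held there; on the $\alpha$-windows the weight is non-decreasing for a dominating agent (the gradient $u^q_i \ge 0$ whenever the error term is nonnegative — here I need the companion observation that once all non-dominating weights are near $0$ and at least one dominating weight is near $1$, $u^q_i(\w^q)$ computed from $M^q_i,S^q_i$ is $\ge -o(1)$, so combined with $\alpha^q_i(k)\to 0$ the net drop over a length-$2\dg$ window is $o(1)$ and cannot escape a shrinking neighborhood of $1$). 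Therefore $w^q_i(t) \to 1$, giving Property~\ref{prop:wt_game_dom_agent}. Together with Lemma~\ref{lem:nash_weight_game}, the limit $\Wb$ satisfies both properties, so $\Wb \in \ne(\game_W)$.

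The main obstacle I expect is the $\alpha$-window bookkeeping for dominating agents when there are \emph{multiple} dominating agents for $q$: there $\max_j f_j(q) = \submax_j f_j(q)$, so $\mu^q = 0$ and the ``strictly positive increment'' argument degenerates on that task. In that case the $\beta$-window increment for a dominating agent is only $o(1)$, not bounded below, and one must instead argue as in Theorem~\ref{th:wt_converge} via a LaSalle / distance-to-$\X$ monotonicity argument: show $d(\{w^q_i\}_{i\in\I^q},\X)$ is non-increasing up to summable errors (the errors being controlled by $\alpha^q_i(k)\to 0$ on $\alpha$-windows and by the convergence of $M^q_i,S^q_i$ on $\beta$-windows), and that it cannot stay bounded away from $0$ because then some dominating agent would have strictly positive increment infinitely often with step sizes $\beta^q_i(k)\to\infty$. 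The non-dominating agents are unaffected by this degeneracy since the gap $\max_j f_j(q) - f_j(q) > 0$ persists. Stitching the ``eventually $w^q_j = 0$ on $\beta$-windows'' facts with ``$o(1)$ recovery on $\alpha$-windows'' to get genuine convergence (not just $\limsup$ control) is the delicate quantitative step, and it is exactly where the asymmetric schedule $\alpha^q_i(k)\to 0$, $\beta^q_i(k)\to\infty$ is essential — the vanishing $\alpha$ kills the periodic ``input-injection'' disturbance highlighted in the remark after Theorem~\ref{th:dist_pbrag_asymp}, while the diverging $\beta$ guarantees the correct weights are reached exactly rather than approximately.
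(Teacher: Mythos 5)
Your main line of argument is the paper's own: the published proof simply notes that, because $\alpha^q_i(k)\to 0$ and $\beta^q_i(k)\to\infty$, for every $\varepsilon>0$ the step-size and period conditions of Theorem~\ref{th:dist_pbrag_asymp} are eventually satisfied, and then reapplies that theorem's window-by-window estimates together with~\labelcref{eq:equivalent}. Your explicit $\alpha$-window/$\beta$-window bookkeeping (non-dominating weights pinned to $0$ by one large-$\beta$ step and recovering only $O(\dg\,\alpha^q_j(k)\,\Delta^q)=o(1)$ on the next $\alpha$-window; dominating weights driven to $1$ symmetrically) is exactly that argument written out, and it is correct.

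One correction to your ``main obstacle'' paragraph: in this paper $\submax\set$ is defined as $\max\{s\in\set \mid s\neq\max\set\}$, the largest value \emph{distinct from} the maximum, so even with several dominating agents for $q$ one has $\mu^q=\tfrac12\bigl(\max_i f_i(q)-\submax_i f_i(q)\bigr)>0$ under Assumption~\ref{asmp:not_all_dominating}. The degeneracy $\mu^q=0$ you fear does not occur, and the LaSalle fallback you sketch is not used in (nor needed for) the paper's argument. That said, your instinct does brush against a soft spot that the paper itself glosses over: the agreement subroutine returns $S^q_i$ as the sub-maximum of the \emph{sampled} values $\{\fseq^q_j(kT)\}_j$, which are generically pairwise distinct even when two agents share the limiting maximum of $f_\cdot(q)$; in that case $S^q_i$ tends to $\max_j f_j(q)$ rather than $\submax_j f_j(q)$, the drift $\fseq^q_{i}(t)-\tfrac12(M^q_i+S^q_i)$ of a dominating agent vanishes with indeterminate sign, and it is multiplied by $\beta^q_i(k)\to\infty$. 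The paper's proof of Theorem~\ref{th:dist_pbrag_asymp} avoids this by bounding the drift only against $j\notin\argmax_i f_i(q)$. So if you intend to cover ties among dominating agents, the issue to address is this sampling effect, not $\mu^q=0$; whenever the top $f$-value for each task is attained by a single agent, your proposal is complete and coincides with the paper's proof.
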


\begin{proof}
  From hypothesis, $\forall \varepsilon > 0$,
  $\exists\, K \in \intpos$, such that
  $\forall t \in \{kT,\cdots,kT+2\dg-1\}$, with $k \geq K$,
  $\alpha^q_i(t) \leq \varepsilon \, (2\,\dg \, \Delta^q)^{-1}$, with
  $\Delta^q$ as in~\labelcref{eq:max_minus_min}. Moreover, $K$ can be
  chosen such that $\forall t \in \{kT+2\dg,\cdots,2kT-1\}$, with
  $k \geq K$,
  $T > 2\dg + ((1-\nu)\mu\min_{i \in \agt, q \in \tsk} \alpha^q_i
  \beta^q_i(t))^{-1} + 1$ for any $\nu \in (0,1)$ and with $\mu$ as
  in~\labelcref{eq:max_minus_submax}. Then the claim of this result is a
  consequence of applying similar arguments as in the proof of
  Theorem~\ref{th:dist_pbrag_asymp} and using~\labelcref{eq:equivalent}.
\end{proof}

We conclude this section by discussing some interesting observations
about the parameters in~\labelcref{eq:dist_pbrag}.

\begin{remark}[On the implementation of d-\dynacr]
  \label{rem:dpbrag_implementation} {\rm Note that even though
    $\diam(\grph)$ is an internal property of the communication graph
    $\grph$ and requires some structural knowledge of the same, the
    claims in Theorems~\ref{th:dist_pbrag_asymp}
    and~\ref{th:dist_pbrag_converge} remain true if $\dg$ is replaced
    with $n$. This is because $\diam(\grph) \leq n$. Moreover, these
    results can be extended to time-varying communication graphs with
    periodic connectivity because the agreement subroutine still works.
    Further, note that the conditions in
    Theorem~\ref{th:dist_pbrag_converge} are only sufficient for
    convergence. In fact, $\beta^q_i(k)$ need not grow unbounded, but
    then knowledge of converging reward values are required for proper
    functioning of the algorithm. For example, if $\mu$ is large, small
    values of $\beta^q_i(k)$ are sufficient to guarantee convergence;
    but if $\mu$ is small then $\beta^q_i(k)$ values have to be
    sufficiently large in order to guarantee that non-dominating
    agents are not assigned the task. Finally, notice that in order
    for the algorithm to work, each agent $i \in \agt$ has to pass
    two values ($M^q_i(t)$, $S^q_i(t)$) for each task $q \in \tsk$ to
    its neighbors at each time step. This makes the local
    communication cost of this algorithm of the order of $O(m)$ per
    iteration time.}  \bulletend
\end{remark}

\begin{table}
\begin{center}
\caption{Approximate $f_i(q)$ values for Simulations}
\label{tab:sim_val}
\begin{tabular}{|l||*{4}{c|}}\hline
\backslashbox{$i \in \agt$}{$q \in \tsk$} & 1 & 2 & 3 & 4\\
\hline \hline
1 & 0.4536 & \cellcolor{teal!25}0.4407 & 0.2881 & 0.0055 \\ \hline
2 & 0.7504 & 0.2228 & 0.0411 & \cellcolor{teal!25}0.2801 \\ \hline
3 & \cellcolor{teal!25}0.7656 & 0.0987 & 0.1381 & 0.2491 \\ \hline 
4 & 0.3023 & 0.2211 & \cellcolor{teal!25}0.3334 & 0.2462 \\ \hline 
\end{tabular}
\end{center}
\begin{center}
\begin{tabular}{|l||*{4}{c|}}\hline
\backslashbox{$i \in \agt$}{$q \in \tsk$} & 5 & 6 & 7 & 8\\
\hline \hline
1 & 0.0049 & 0.2394 & \cellcolor{teal!25}0.3152 & 0.2217 \\ \hline
2 & 0.2374 & 0.0768 & 0.0852 & 0.1760 \\ \hline
3 & 0.2969 & 0.1003 & 0.1471 & \cellcolor{teal!25}0.6902 \\ \hline 
4 & \cellcolor{teal!25}0.3033 & \cellcolor{teal!25}0.4991 & 0.1231 & 0.5931 \\ \hline 
\end{tabular}
\end{center}
\end{table}

\begin{figure}
\begin{center}
	\begin{tabular}{c}
		\hspace{-2ex}\includegraphics[scale=0.28]{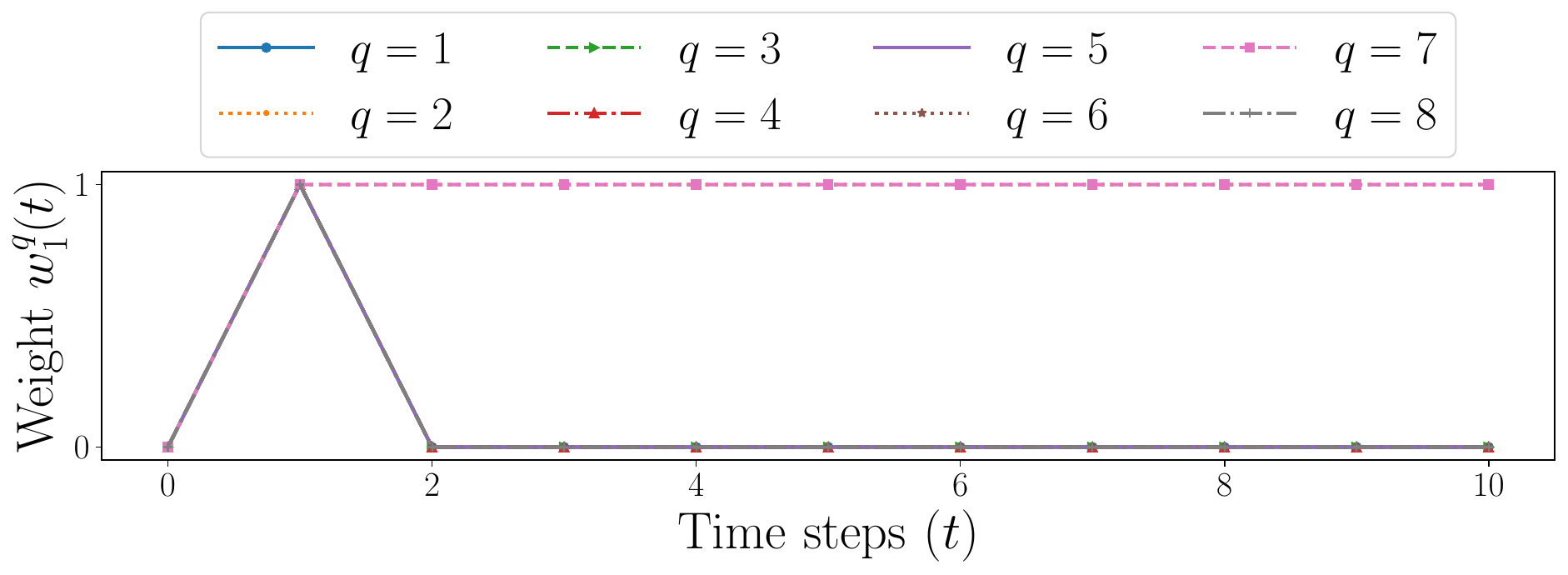} \\
		\hspace{-2ex}\includegraphics[scale=0.28]{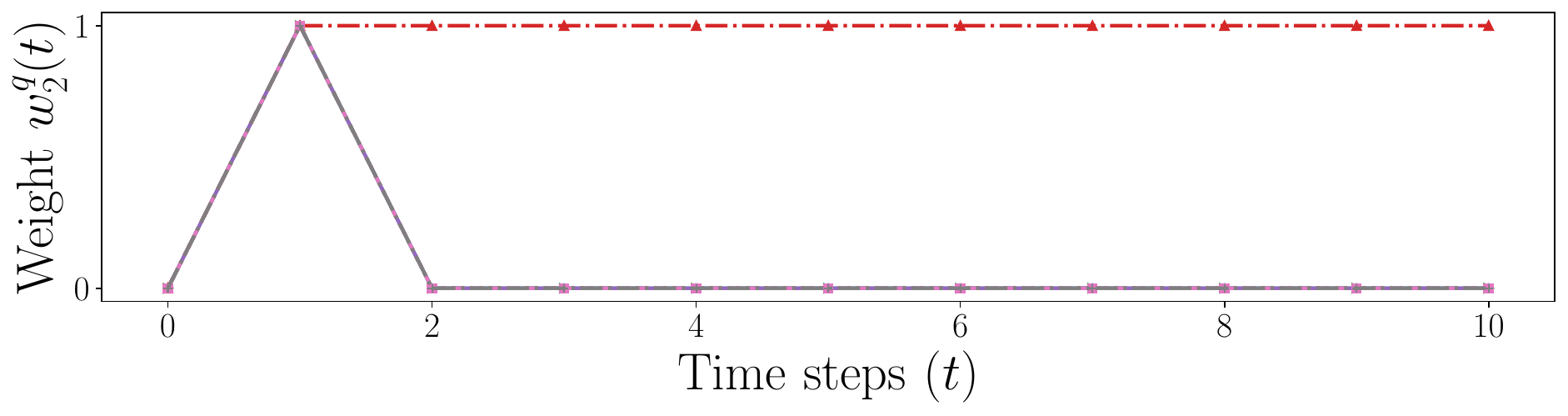} \\
		\hspace{-2ex}\includegraphics[scale=0.28]{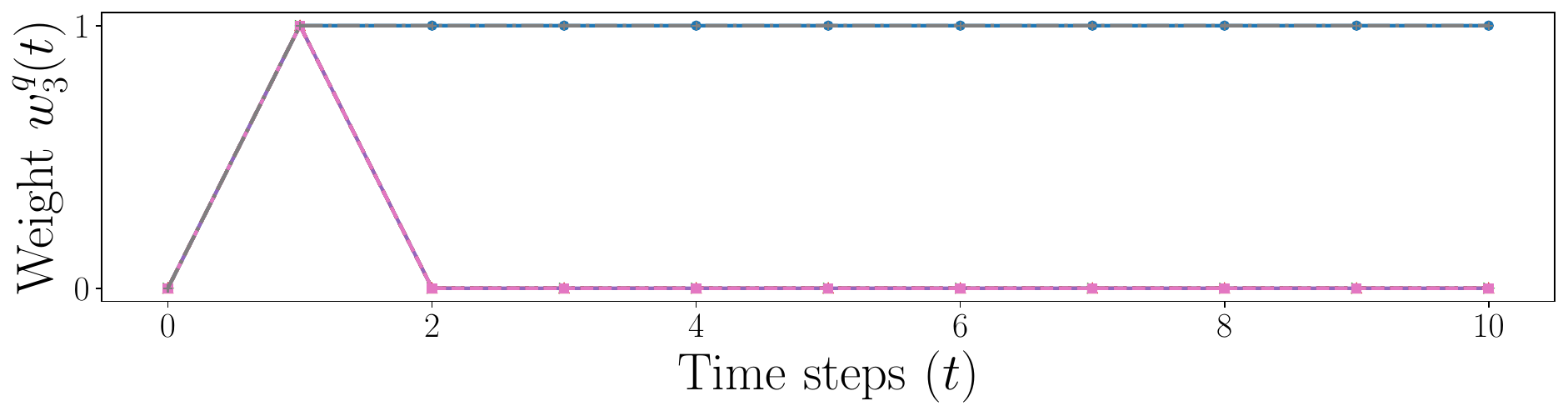} \\
		\hspace{-2ex}\includegraphics[scale=0.28]{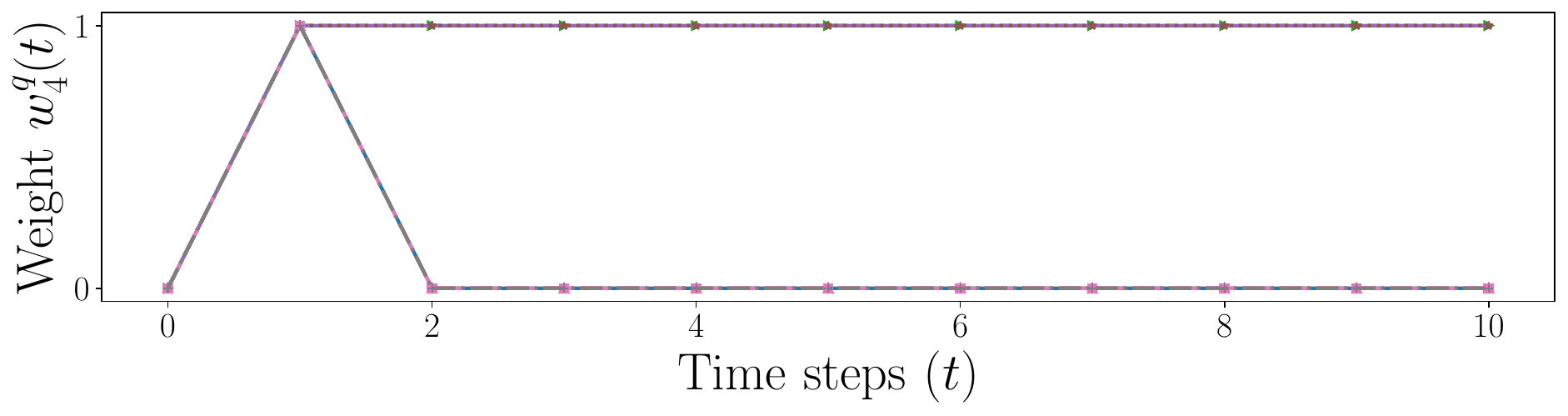}
	\end{tabular}
\end{center}
	\caption{\dynacr\, using~\labelcref{eq:weight_dynamics_max} and large step size $\gamma$. Plots share a common legend.}
	\label{fig:full_comm_high_alpha}
\end{figure}

\section{Simulations} \label{sec:sims}

In this section, we verify our major claims and illustrate some
interesting features of our algorithms.

\subsection{Fast convergence of \dynacr\, with large step-size} \label{sec:sims_high_alpha}

Here, we simulate $n = 4$ agents to optimally allocate $m = 8$ tasks
with $\rf_i(q), \phi_i(q) \sim \unif[0,1]$,
$\forall i \in \agt, q \in \tsk$.  In particular,
Table~\ref{tab:sim_val} gives the approximate values of $f_i(q)$,
$\forall i \in \agt$, $q \in \tsk$. For each $q \in\tsk$, the
highlighted cell represents $\max_{i \in \agt} f_i(q)$.

We first verify the claim in Remark~\ref{rem:finite_time_two_steps}.
Figure~\ref{fig:full_comm_high_alpha} shows the solution evolution
using~\labelcref{eq:weight_dynamics_max} from an initial $\W(0) =
\zero$. Optimal partition as in Figure~\ref{fig:full_comm_high_alpha}
is given as $\alloc_1 =\{2,7\}$, $\alloc_2 =\{4\}$,
$\alloc_3 =\{1,8\}$, $\alloc_4 =\{3,5,6\}$.  Here, since the values of
$f_i \in [0,1]$, $\gamma^q_i \in O(10^6)$ was required to make the
solutions converge in \emph{two} time steps. For larger deviations in
the values of $f_i$, much smaller values of $\gamma^q_i$'s can achieve
similar effects.

\subsection{Effect of constant step-size on d-\dynacr}
\label{sec:sims_dist_const_ss}
Here, we deal with the claims in Theorem~\ref{th:dist_pbrag_asymp} for
$n = 8$ agents optimally allocating $m = 1$ task. We take
$f_1(1) = B$, $f_2(1) = 0.9\,B$, and $f_i(1) = 0.3 \, B/ i$,
$\forall \, i \in \{3,\cdots,8\}$, with $B = 1000$. Thus agent $1$ is
the dominating agent. Further, we consider an unknown reward structure
with $\fseq^q_i(t) = f_i(q) + a^q_i\,\cos(b^q_i\,t)\exp(-c^q_i\,t)$,
$\forall i \in \agt$, $\forall q \in \tsk$, where
$a^q_i \sim \unif[0,f_i(q)]$, $b^q_i \sim \unif[0,10]$, and
$c^q_i \sim \unif[0,1]$. We set the communication graph
$\grph = (\agt,\edg)$ with $\edg = \{(1,2),(2,3),(3,4),(4,1)\}$.

Figure~\ref{fig:dist_const_ss} shows the solution evolution
using~\labelcref{eq:dist_pbrag} with constant step-size from an initial
$\W(0) = \zero$. It is interesting to note from
Figure~\ref{fig:dist_const_ss} that if $\varepsilon$ is large, then
$w^1_1(t)$ reaches $1$ faster, but the weights of the non-dominating
agents rise higher. On the other hand if $\varepsilon$ is small, then
the rise in the weights of the non-dominating agents is less but
$w^1_1(t)$ reaches $1$ slower. This is because $\varepsilon$ affects
the choice of $T$ as well.

\subsection{d-\dynacr\, with time-varying
  step-sizes} \label{sec:sims_distributed} Here, we again simulate
$n = 4$ agents optimally allocating $m = 8$ tasks. We take the unknown
reward structure as in Section~\ref{sec:sims_dist_const_ss} with
$f_i(q)$ as in Table~\ref{tab:sim_val}. Further, we use the
distributed approach using~\labelcref{eq:dist_pbrag} with time-varying
step-sizes as described in Theorem~\ref{th:dist_pbrag_converge}. We
also set the communication graph $\grph$ as in
Section~\ref{sec:sims_dist_const_ss}.

Figure~\ref{fig:dist_comm} shows the solution evolution
using~\labelcref{eq:dist_pbrag} from an initial $\W(0) = \zero$. Optimal
partition as in Figure~\ref{fig:dist_comm} is given as
$\alloc_1 =\{2,7\}$, $\alloc_2 =\{4\}$, $\alloc_3 =\{1,8\}$,
$\alloc_4 =\{3,5,6\}$. This is exactly same as the observation in
Section~\ref{sec:sims_high_alpha}.  Further, notice from
Figure~\ref{fig:dist_comm} that the weights of agents $3$ and $4$ take
longer time to settle than agents $1$ and $2$. In general, convergence
rate of the algorithm depends on the properties of the unknown reward
sequences and hence is difficult to characterize.

\begin{figure}
\begin{center}
	\begin{tabular}{c}
		\hspace{-2ex}\includegraphics[scale=0.28]{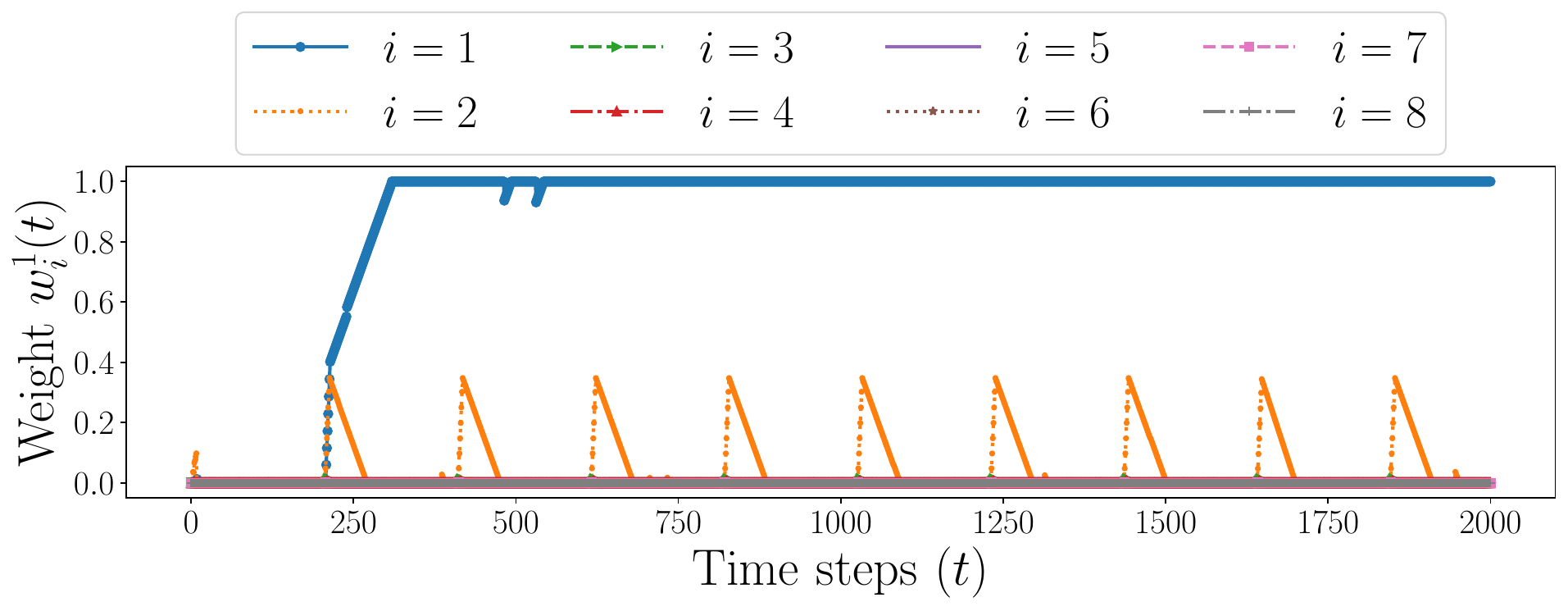} \\
		\hspace{-2ex}\includegraphics[scale=0.28]{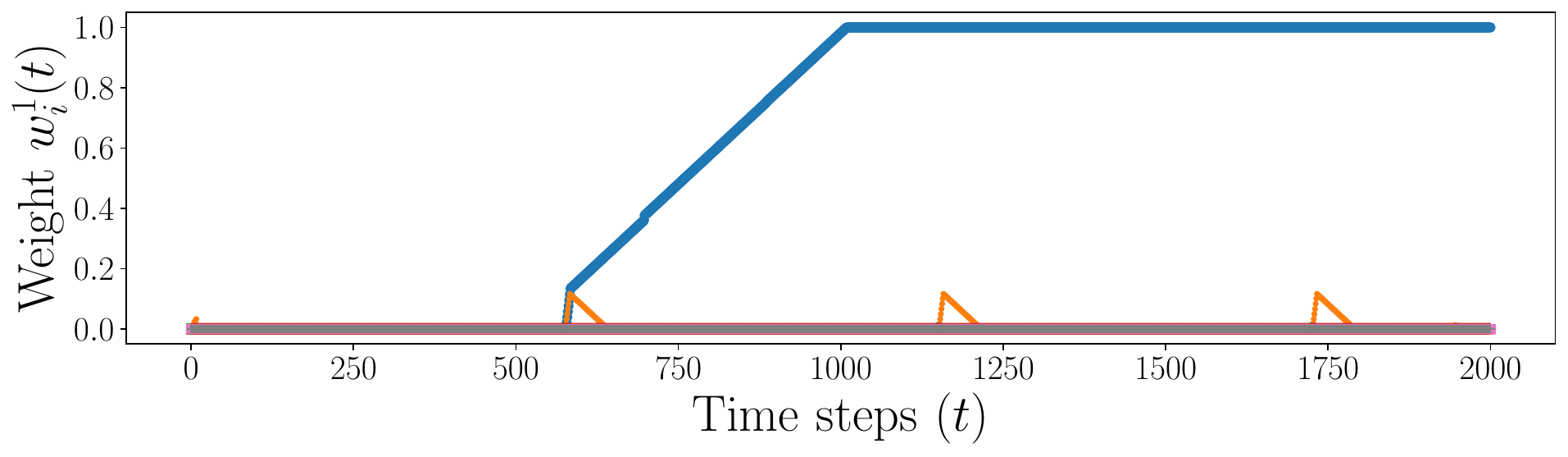}\\
	\end{tabular}
\end{center}
\caption{d-\dynacr\, for unknown rewards with constant step-size
  using~\labelcref{eq:dist_pbrag} on cyclic communication graph. Step-size
  $\gamma^q_i(t)$ and time-period $T$ were chosen as in
  Theorem~\ref{th:dist_pbrag_asymp} with different $\varepsilon$ and
  $\nu = 0.1$. The plots share a common legend. (Top)
  $\varepsilon = 0.9$. (Bottom) $\varepsilon = 0.3$.}
	\label{fig:dist_const_ss}
\end{figure}

\begin{figure}
\begin{center}
	\begin{tabular}{c}
		\hspace{-2ex}\includegraphics[scale=0.28]{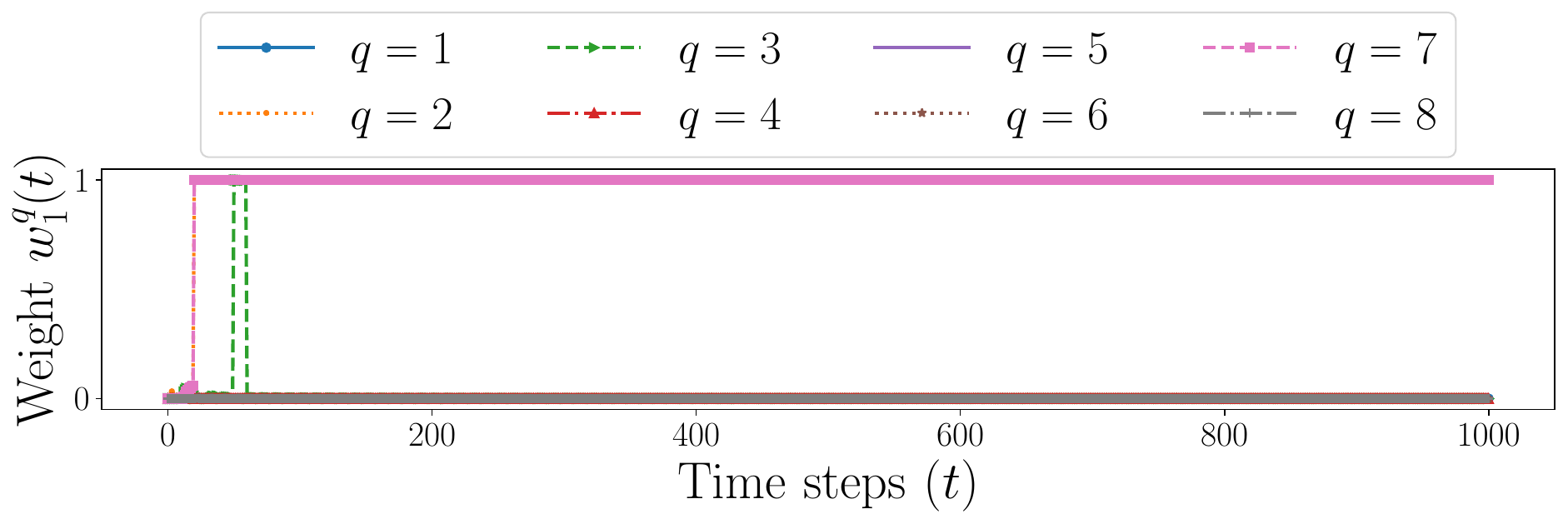} \\
		\hspace{-2ex}\includegraphics[scale=0.28]{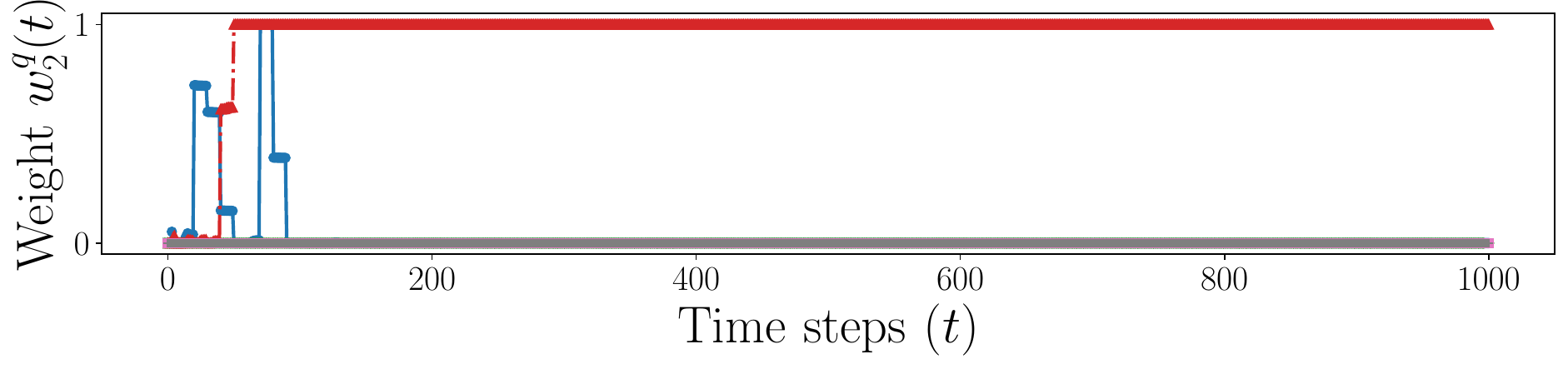} \\
		\hspace{-2ex}\includegraphics[scale=0.28]{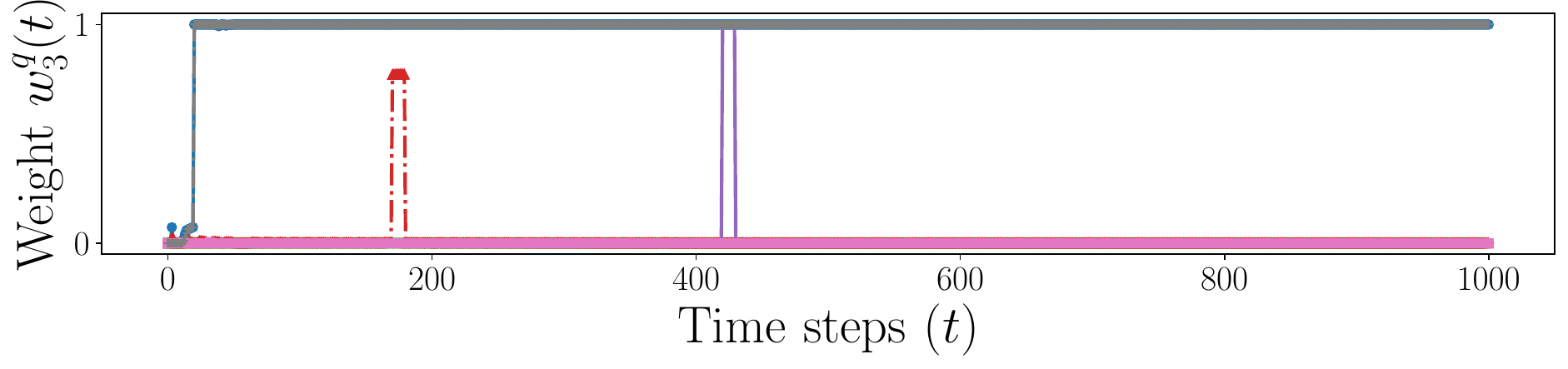} \\
		\hspace{-2ex}\includegraphics[scale=0.28]{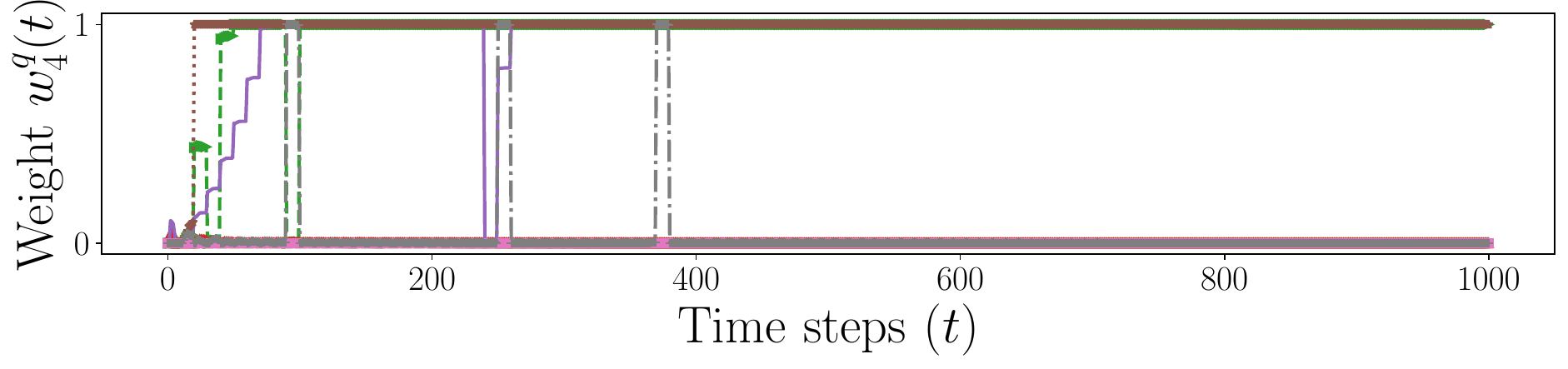}
	\end{tabular}
\end{center}
\caption{d-\dynacr\, for unknown rewards with time-varying step-size
  using~\labelcref{eq:dist_pbrag} on cyclic communication graph. The plots
  share a common
  legend.} 
	\label{fig:dist_comm}
\end{figure}

\section{Conclusion and Future Work} \label{sec:conclude}

In this paper, we presented a game theoretic formulation of an optimal task
allocation problem for a group of agents. By allowing
agents to assign weights between \emph{zero} and \emph{one} for each
task, we relaxed the combinatorial nature of the problem. This led to
a partition and weight game, whose NE formed a superset of the optimal
task partition. Then, we provided a distributed best-response projected gradient ascent by which convergence to the NE of the weight game
was guaranteed. 

Future work will consider constraints on number of tasks for each
agent, 
and generalizing the setup to continuous space of tasks and classes of
tasks.

\bibliographystyle{IEEEtran}
\bibliography{alias.bib,SMD-add.bib,SM.bib,JC.bib,FB.bib}

\end{document}